\documentclass{article}
\usepackage[hmarginratio=1:1]{geometry}
\usepackage{titlesec}
\usepackage{pdfpages}
\titleformat{\section}  
  {\fontsize{14}{16}\bfseries} 
  {\thesection} 
  {1em} 
  {} 
  [] 

\titleformat{\subsection}  
  {\fontsize{12}{14}\bfseries} 
  {\thesubsection} 
  {1em} 
  {} 
  [] 

\usepackage[utf8]{inputenc}
\usepackage[utf8]{inputenc}
\usepackage{tikz, tikz-cd}
\usepackage{url, hyperref}
\usepackage{amsmath, amsthm, amssymb, stmaryrd, xcolor, enumitem}
\usepackage{graphicx}
\usepackage{setspace}
\usepackage{fancyref}
\usepackage{hyperref}
\hypersetup{colorlinks,linkcolor={blue},citecolor={blue},urlcolor={red}}
\usepackage{biblatex}
\usepackage{comment}
\usepackage{breqn}
\newtheorem{theorem}{Theorem}[section]
\usepackage{mathtools}
\usepackage{floatrow}
\usepackage{newunicodechar}

\theoremstyle{definition}
\newtheorem{definition}[theorem]{Definition}
\newtheorem{example}[theorem]{Example}
\newtheorem{remark}[theorem]{Remark}

\theoremstyle{plain}

\newtheorem{corollary}[theorem]{Corollary}
\newtheorem{lemma}[theorem]{Lemma}
\newtheorem{conjecture}[theorem]{Conjecture}
\newtheorem{conjecture/theorem}[theorem]{Conjecture/Theorem}

\newtheorem{proposition/definition}[theorem]{Proposition/Definition}
\newtheorem{tentative definition}[theorem]{Tentative Definition}

\newenvironment{explanation of the claim}{%
  \proof}{\endproof}

\newcommand{\gdrbmath}{\mathrm{G}_{\mathrm{dRB}}}
\newcommand{\gm}{$\mathbb{G}_{\mathrm{m}}$ }

\newcommand{\resgmmath}{\mathrm{Res}_{E/\mathbb{Q}}\mathbb{G}_{\mathrm{m}}}
\newcommand{\gmmath}{\mathbb{G}_{\mathrm{m}} }

\newcommand{\gdrbhmath}{\mathrm{G}_{\mathrm{dRB}}^{\mathrm{h}} }

\newcommand{\absgalois}{\mathrm{Gal}(\overline{\mathbb{Q}}/\mathbb{Q})}
\newcommand{\galoisL}{\mathrm{Gal}(L/\mathbb{Q})}

\newcommand{\qbar}{\overline{\mathbb{Q}}}

\newcommand{\betti}{\mathrm{H}^1(A,\mathbb{Q})}
\newcommand{\drba}{\mathrm{H}^1_{\mathrm{dRB}}(A,\mathbb{Q})}

\title{Weight of the De Rham-Betti Structures of Abelian Varieties}
\author{Zekun Ji}
\date{}
\begin{document}
\maketitle
\begin{abstract}
    In this note, we prove that for any abelian variety $A$ defined over $\qbar$, its de Rham-Betti (dRB) group $\mathrm{G}_{\mathrm{dRB}}(A)$ necessarily contains the group of homotheties in $\mathrm{GL}(\mathrm{H}^1_{\mathrm{B}}(A, \mathbb{Q}))$. Consequently, this rules out the existence of non-zero dRB classes in odd-degree cohomology groups of abelian varieties over $\qbar$. 
\end{abstract}
\tableofcontents
\section{Introduction}
Let $X$ be a smooth projective variety defined over $\qbar$. Grothendieck established the following comparison isomorphism, based on the geometry of $X$ (see \cite{grothendieck1966rham}): $$\rho_{m}: \mathrm{H}^{j}_{\mathrm{B}}(X,\mathbb{Q})\otimes_{\mathbb{Q}} \mathbb{C} \cong \mathrm{H}^{j}_{\mathrm{dR}}(X/\qbar) \otimes_{\qbar} \mathbb{C}.$$ 
Fix a $\mathbb{Q}$-basis of $\mathrm{H}^{j}_{\mathrm{B}}(X,\mathbb{Q})$ and fix a $\overline{\mathbb{Q}}$-basis of $\mathrm{H}^{j}_{\mathrm{dR}}(X/\overline{\mathbb{Q}})$. Then we call the entries of the matrix representing $\rho_{m}$ the periods on $X$. One facet of the Grothendieck period conjecture states that the polynomial relations with $\mathbb{Q}$-coefficients among periods on $X$ come from algebraic cycles.
\begin{conjecture}[\cite{andre2004introduction}, Conjecture 7.5.1.1]\label{wgpc}
  Let $X$ be a smooth projective variety defined over $\qbar$ and let $j$ be an integer with $0\leq j\leq \mathrm{dim}(X)$. If $\alpha\in\mathrm{H}_{\mathrm{B}}^{2j}(X,\mathbb{Q})$ satisfies $\rho_{m}(\alpha)\in(2\pi \sqrt{-1})^{-j}\mathrm{H}_{\mathrm{dR}}^{2j}(X/\qbar)$, then $\alpha$ is the image of a $\mathbb{Q}$-coefficient algebraic cycle on $X$ under the cycle class map.
\end{conjecture}

A stronger version of the Grothendieck period conjecture (see \cite[Conjecture 2.12]{bost2016some} for example) also predicts the following phenomenon in odd-degree cohomology groups.
\begin{conjecture}\label{oddgpc}
Let $X$ be a smooth projective variety defined over $\qbar$ and let $k$ be an odd integer. Then no non-zero element $\alpha\in\mathrm{H}_{\mathrm{B}}^{k}(X,\mathbb{Q})$ satisfies $\rho_{m}(\alpha)\in(2\pi\sqrt{-1})^{l}\mathrm{H}_{\mathrm{dR}}^{k}(X/\qbar)$ for any $l\in\mathbb{Z}$.   
\end{conjecture}
In \cite[Theorem 4.1]{bost2016some}, it is shown that for any smooth quasi-projective variety $X$ defined over $\qbar$, $\mathrm{H}_{\mathrm{dRB}}^{1}(X,\mathbb{Q}(l))$ does not support any dRB class for any $l\in\mathbb{Z}$. However, the case of higher odd-degree cohomology remains mysterious. In this note, we will focus on the case of abelian varieties and show the following result.
\begin{corollary}[=Corollary \ref{noodddrbclass}]
Let $A$ be an abelian variety defined over $\qbar$ and let $k$ be an odd integer. Then no non-zero element $\alpha\in\mathrm{H}_{\mathrm{B}}^{k}(A,\mathbb{Q})$ satisfies $\rho_{m}(\alpha)\in(2\pi\sqrt{-1})^{l}\mathrm{H}_{\mathrm{dR}}^{k}(A/\qbar)$ for any $l\in\mathbb{Z}$.   
\end{corollary}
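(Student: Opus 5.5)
Throughout I use the main theorem announced in the abstract: the dRB group $\gdrbmath(A)\subset \mathrm{GL}(\betti)$ contains the homotheties $\gmmath$. I also use the Tannakian description of dRB classes. Recall that $\gdrbmath(A)$ is the subgroup of $\mathrm{GL}(\mathrm{H}^1_{\mathrm{B}}(A,\mathbb{Q}))\times\gmmath$ (or its image in $\mathrm{GL}(\betti)$, with the Tate twist handled through the $\gmmath$-factor) that fixes every dRB class in every tensor construction $\mathrm{H}^1_{\mathrm{B}}(A,\mathbb{Q})^{\otimes a}\otimes (\mathrm{H}^1_{\mathrm{B}}(A,\mathbb{Q})^\vee)^{\otimes b}\otimes\mathbb{Q}(l)$.

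The plan has three steps.

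\textbf{Step 1.} Identify $\mathrm{H}^k_{\mathrm{B}}(A,\mathbb{Q})$ with $\bigwedge^k \mathrm{H}^1_{\mathrm{B}}(A,\mathbb{Q})$, compatibly with the de Rham side and with the comparison isomorphism $\rho_m$. This holds because cup product is compatible with $\rho_m$, and both cohomologies of an abelian variety are exterior algebras on $\mathrm{H}^1$. Thus $\mathrm{H}^k_{\mathrm{dRB}}(A,\mathbb{Q}(l))$ is a direct summand of a tensor construction on $\drba$.

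\textbf{Step 2.} A class $\alpha$ with $\rho_m(\alpha)\in(2\pi\sqrt{-1})^{l}\mathrm{H}^k_{\mathrm{dR}}(A/\qbar)$ is exactly a dRB class in $\mathrm{H}^k_{\mathrm{dRB}}(A,\mathbb{Q}(l))$, up to the sign convention for $l$. It is therefore fixed by $\gdrbmath(A)$ acting on $\bigwedge^k \mathrm{H}^1_{\mathrm{B}}\otimes\mathbb{Q}(l)$. This is the step needing the most care: the statement "dRB classes are invariants" must hold for the twisted object. For that I need to know how $\gdrbmath(A)$ acts on $\mathbb{Q}(l)$.

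The cleanest route is to work in the dRB category generated by $\drba$ together with $\mathbb{Q}(1)$. Then $\mathbb{Q}(1)$ appears inside $\bigwedge^2\mathrm{H}^1$ through the polarization class, which is algebraic and hence dRB. So $\mathbb{Q}(-1)\cong$ the line spanned by the polarization class in $\mathrm{H}^2$, and it already lies in the category generated by $\drba$. A homothety $t\in\gmmath$ acts on $\mathrm{H}^1$ by $t$, hence on the polarization line by $t^2$. Consequently $t$ acts on $\mathbb{Q}(l)$ by $t^{-2l}$, or by $t^{2l}$ depending on conventions; in either case by an even power of $t$.

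\textbf{Step 3.} The homothety $t$ acts on $\bigwedge^k\mathrm{H}^1_{\mathrm{B}}\otimes\mathbb{Q}(l)$ by the scalar $t^{k\mp 2l}$. Since $k$ is odd, the exponent $k\mp2l$ is odd, hence nonzero, so this character of $\gmmath$ is nontrivial. Pick $t\in\mathbb{Q}^\times$ with $t^{k\mp2l}\neq 1$, say $t=2$. An invariant vector $\alpha$ must satisfy $t^{k\mp2l}\alpha=\alpha$, which forces $\alpha=0$.

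The main obstacle is the bookkeeping in Step 2: making sure that $\gmmath\subset\gdrbmath(A)$ is taken inside the group that acts compatibly on the Tate twists. The polarization trick reduces this to the statement about $\mathrm{GL}(\betti)$ alone. All the real content lies in the theorem that the homotheties lie in $\gdrbmath(A)$, which I assume here.
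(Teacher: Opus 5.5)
Your proposal is correct and is essentially the paper's argument. The homotheties lie in $\gdrbmath(A)$ (Theorem \ref{thmgminside}), and the Tate twist lies in $\langle\drba\rangle^{\otimes}$ via the polarization, with homotheties acting on it by an even power of $t$ (Corollary \ref{gmweights}); so they act on $\wedge^k\mathrm{H}^1\otimes\mathbb{Q}_{\mathrm{dRB}}(n)$ through an odd, hence nonzero, power and kill all invariants. The only cosmetic difference is that you realize $\mathbb{Q}_{\mathrm{dRB}}(-1)$ as the line in $\wedge^2\drba$ spanned by the polarization class, whereas the paper uses the polarization pairing $\drba\otimes\drba\to\mathbb{Q}_{\mathrm{dRB}}(-1)$ and lets $t$ act via $\tilde{\Phi}$, i.e.\ by $t^{-1}$.
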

Unlike the case of Hodge structures, where the notion of weight is built into the definitions, the general formalism of de Rham-Betti structures and groups lacks such a notion (see \cite[p.~3]{ksv2026rhambetticlassescoefficients} for a more detailed explanation). This is the main obstruction to directly extending \cite[Theorem 4.1]{bost2016some} to odd-degree cohomology groups of abelian varieties. The main goal of this note is to show the following property of the dRB groups of abelian varieties, which is a generalization of \cite[Proposition 4.7]{ji2025rhambettigroupstypeiv}.
\begin{theorem}\label{introthm1}[=Theorem \ref{thmgminside}]
    For any abelian variety $A$ defined over $\qbar$, its de Rham-Betti group $\gdrbmath(A)$ contains the group of homotheties in $\mathrm{GL}(\mathrm{H}_{\mathrm{B}}^{1}(A,\mathbb{Q}))$.
\end{theorem}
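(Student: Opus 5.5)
The plan is to argue by contradiction. Write $V=\mathrm{H}^1_{\mathrm{B}}(A,\mathbb{Q})$ and $w:\mathbb{G}_{\mathrm{m}}\to\mathrm{GL}(V)$ for the homothety cocharacter. I will use the standard inclusion $\gdrbmath(A)\subset\mathrm{MT}(A)\subset\mathrm{GSp}(V,\psi)$, where $\psi$ is a polarization. It holds because Hodge classes on abelian varieties are absolutely Hodge (Deligne), hence de Rham--Betti. I will also use the similitude character $\nu:\mathrm{GSp}(V,\psi)\to\mathbb{G}_{\mathrm{m}}$, which satisfies $\nu\circ w(t)=t^2$.

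\textbf{Step 1: $\nu$ is surjective on $\gdrbmath(A)$.} The polarization gives a morphism of dRB structures $\bigwedge^2 V\to\mathbb{Q}(-1)$, so $\nu|_{\gdrbmath(A)}$ factors through the dRB group of $\mathbb{Q}(-1)$. Its period is $2\pi\sqrt{-1}$, which is transcendental, so that dRB group is all of $\mathbb{G}_{\mathrm{m}}$. Hence $\nu(\gdrbmath(A))=\mathbb{G}_{\mathrm{m}}$.

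\textbf{Step 2: reduce to a character.} Suppose $w(\mathbb{G}_{\mathrm{m}})\not\subset\gdrbmath(A)$. Then $w(\mathbb{G}_{\mathrm{m}})\cap\gdrbmath(A)$ is finite, and $H:=\gdrbmath(A)\cdot w(\mathbb{G}_{\mathrm{m}})\subset\mathrm{MT}(A)$ has $H/\gdrbmath(A)\cong\mathbb{G}_{\mathrm{m}}$. So there is a character $\chi$ of $H$ that is trivial on $\gdrbmath(A)$ and satisfies $\chi(w(t))=t^n$ for some $n\neq0$. By Chevalley's theorem, together with reductivity of $\mathrm{MT}(A)$ so that lines can be taken as tensor invariants, $\chi$ is realized on a line $L$ inside some $T=V^{\otimes a}\otimes V^{\vee\otimes b}$. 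Here $w$ acts on $T$ by $t^{a-b}$, so $a-b=n\neq0$. Since $\gdrbmath(A)$ fixes $L$ pointwise, any generator $\alpha$ of $L$ is a dRB class in $T$. Twisting $T$ by a power of $\mathbb{Q}(1)$, which is allowed by Step 1, I can arrange that $a-b$ is odd whenever $n$ is odd. The contradiction to aim for is therefore a non-zero dRB class in a tensor space of non-zero weight.

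\textbf{Step 3: kill that class (main obstacle).} This is precisely the weight phenomenon that dRB formalism lacks, and I expect it to be the hard part. I would try two routes in order.
\begin{enumerate}
\item First, use Step 1 together with $\nu\circ w=t^2$. Pick $g\in\gdrbmath(A)$ with $\nu(g)=\nu(w(t))$. Then $g^{-1}w(t)$ lies in $H\cap\ker\nu$, and $\chi$ takes the value $t^n$ on it. So $\chi$ is non-trivial on $H^1:=H\cap\mathrm{Sp}(V,\psi)$ while being trivial on $\gdrbmath(A)\cap\mathrm{Sp}$. The goal is then to show $\gdrbmath(A)\cap\mathrm{Sp}$ has finite index in $H^1$.
\item For that I would use the analytic subgroup theorem of W\"ustholz, in the form of Bost--Charles: the dRB endomorphisms of $V$ are exactly $\mathrm{End}^0(A)$, and the dRB torsor has a $\qbar$-point. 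This should show that the torus $\mathrm{Z}(H)^\circ$ already lies in $\gdrbmath(A)$. The idea is to pass to CM or isotypic factors after an isogeny decomposition $A\sim\prod A_i^{n_i}$ and handle each factor there, where the centre of $\mathrm{MT}$ is a torus and its periods are logarithms of algebraic points.
\end{enumerate}

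Once $\chi$ is shown to be trivial, Step 2 gives a contradiction, so $w(\mathbb{G}_{\mathrm{m}})\subset\gdrbmath(A)$. For a product of abelian varieties, I would finally check that the diagonal homotheties survive. This uses Step 1 simultaneously for all factors, since the polarizations of the factors share the common character $\nu$.
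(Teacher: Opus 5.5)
Your Steps 1 and 2 are correct, but Step 2 only restates the theorem: a non-zero dRB class in $V^{\otimes a}\otimes V^{\vee\otimes b}$ with $a\neq b$ exists exactly when $w(\mathbb{G}_{\mathrm{m}})\not\subset\mathrm{G}_{\mathrm{dRB}}(A)$. So the whole content is in Step 3, and Step 3 contains no argument.

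\begin{itemize}
\item Route 1 is circular. Because $\nu$ is surjective on both $H$ and $\mathrm{G}_{\mathrm{dRB}}(A)$, the quotient $H^1/(\mathrm{G}_{\mathrm{dRB}}(A)\cap\mathrm{Sp}(V,\psi))$ is one-dimensional precisely when the homotheties are missing. The finite-index claim is therefore the theorem again.
\item Route 2 points at the right object, a central torus, but supplies no mechanism. Saying the periods are logarithms of algebraic points does not yield $\mathrm{Z}(H)^{\circ}\subset\mathrm{G}_{\mathrm{dRB}}(A)$. That the torsor has a $\qbar$-point is automatic over $\qbar$ and carries no information.
\item Your closing product step is not a formality either. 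Sharing the similitude character does not force the diagonal homotheties into a subgroup of $\prod_i\mathrm{G}_{\mathrm{dRB}}(A_i)$ that surjects onto each factor. For example, let each factor group be the split torus $\{(s,s')\}$ acting by $s$ on a Lagrangian $U_i$ and by $s'$ on a complementary Lagrangian, so $\nu=ss'$. The graph of $(s,s')\mapsto(s^2,s^{-1}s')$ preserves $\nu$ and surjects onto both factors, yet meets the diagonal homotheties trivially. What excludes such configurations is exactly the ingredient you are missing.
\end{itemize}

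The missing idea is structural, not transcendental, and it closes the argument once you pass to centres.
\begin{enumerate}
\item[(i)] $\mathrm{G}_{\mathrm{dRB}}(A)$ is connected and reductive, and any character kills its derived group. So your surjective $\nu$ (equivalently $\det$) is already surjective on the torus $\mathrm{Z}(\mathrm{G}_{\mathrm{dRB}}(A))^{\circ}$.
\item[(ii)] By Bost--Charles, $\mathrm{End}_{\mathrm{dRB}}(V)=\mathrm{End}^{\circ}(A)=\mathrm{End}_{\mathrm{Hdg}}(V)$. Hence this torus lies in $\mathrm{Z}(\mathrm{MT}(A))^{\circ}$, which is the image of $\mathbb{G}_{\mathrm{m}}\times\mathrm{Z}(\mathrm{Hdg}(A))^{\circ}$ under $\Phi(t,g)=t^{-1}g$.
\item[(iii)] $\mathrm{Z}(\mathrm{Hdg}(A))^{\circ}$ embeds in $\prod_i\mathrm{U}_{E_i}$, the product over the simple factors of type IV. It is trivial for types I--III, and for type IV the Rosati involution restricts to complex conjugation on the CM centre $E_i$. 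Complex conjugation acts by $-1$ on $X^{*}(\mathrm{U}_{E})$, so no algebraic subgroup of $\prod_i\mathrm{U}_{E_i}$ surjects onto $\mathbb{G}_{\mathrm{m}}$.
\item[(iv)] Therefore $\Phi^{-1}(\mathrm{Z}(\mathrm{G}_{\mathrm{dRB}}(A)))^{\circ}$ must project onto the $\mathbb{G}_{\mathrm{m}}$ factor. Goursat's lemma, together with (iii), then forces it to contain $\mathbb{G}_{\mathrm{m}}\times\{1\}$. So the homotheties lie in $\mathrm{Z}(\mathrm{G}_{\mathrm{dRB}}(A))^{\circ}$.
\end{enumerate}
This is the paper's proof. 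It treats $A$ in one stroke: the Albert decomposition is used only to locate $\mathrm{Z}(\mathrm{Hdg}(A))$ inside $\prod_i\mathrm{U}_{E_i}$, so no factor-by-factor gluing of dRB groups is needed.
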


Combining this theorem with the Kuga-Satake correspondence (see, for example, \cite[Proposition 6.4.3]{andre1996shafarevich} and \cite[Theorem 6.1]{ksv2026rhambetticlassescoefficients}), we have the following observation.
\begin{corollary}[=Corollary \ref{corgminhk}]
Let $X$ be a hyper-Kähler variety defined over $\qbar$. Then the dRB group of $\mathrm{H}^{2}_{\mathrm{dRB}}(X,\mathbb{Q})$ contains the subgroup of homotheties in $\mathrm{GL}(\mathrm{H}_{\mathrm{B}}^{2}(X,\mathbb{Q}))$.
\end{corollary}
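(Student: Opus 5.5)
The idea is to deduce this from Theorem~\ref{introthm1} by pushing the homotheties of $\mathrm{H}^1_{\mathrm{B}}(A,\mathbb{Q})$ through the Kuga--Satake correspondence, which identifies $\mathrm{H}^2(X)$ with a sub-dRB structure of the cohomology of an abelian variety. The whole argument rests on two facts. First, the dRB group of a sub-dRB structure of a tensor construction $T$ on $\mathrm{H}^1_{\mathrm{dRB}}(A,\mathbb{Q})$ is the image of $\gdrbmath(A)$ under the restriction map to that sub-structure. Second, a homothety $\lambda$ of $\mathrm{H}^1$ acts on any weight-$2$ tensor construction by $\lambda^2$.

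The key steps, in order, are as follows.

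\textbf{Step 1 (Kuga--Satake).} Choose an ample class $h$ and decompose
\[
\mathrm{H}^2_{\mathrm{B}}(X,\mathbb{Q}) = \mathbb{Q}h \oplus \mathrm{H}^2_{\mathrm{prim}}.
\]
By \cite[Proposition 6.4.3]{andre1996shafarevich}, there is a Kuga--Satake abelian variety $A$ defined over $\qbar$. By \cite[Theorem 6.1]{ksv2026rhambetticlassescoefficients}, there is an injection of dRB structures
\[
\mathrm{H}^2_{\mathrm{prim}} \hookrightarrow \mathrm{H}^1_{\mathrm{B}}(A,\mathbb{Q})\otimes \mathrm{H}^1_{\mathrm{B}}(A,\mathbb{Q}).
\]
I will use the untwisted version, i.e.\ weight $2$ and no Tate twist. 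If the reference states it with a twist, such as $\mathrm{H}^2_{\mathrm{prim}}(1)\hookrightarrow \mathrm{End}(\mathrm{H}^1(A))$, I will undo the twist by tensoring with a polarization class of $A$. That class is a dRB class in $\mathrm{H}^2(A)(1)$, because it is algebraic, so tensoring with it preserves dRB structures.

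\textbf{Step 2 (the line $\mathbb{Q}h$).} The class $h$ is algebraic, so $\mathbb{Q}h\cong \mathbb{Q}(-1)$ as dRB structures. The same holds for the line spanned by a polarization class $\theta\in \mathrm{H}^2_{\mathrm{B}}(A,\mathbb{Q})$. This gives an embedding
\[
\mathrm{H}^2_{\mathrm{dRB}}(X,\mathbb{Q}) \hookrightarrow \bigl(\mathrm{H}^1_{\mathrm{B}}(A,\mathbb{Q})^{\otimes 2}\bigr)^{\oplus 2},
\]
compatible with dRB structures. Consequently the natural surjection $\gdrbmath(A)\to \gdrbmath(\mathrm{H}^2(X))$ is just restriction to this subspace.

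\textbf{Step 3 (conclusion).} By Theorem~\ref{introthm1}, the scalars $\lambda\in\gmmath\subset \mathrm{GL}(\mathrm{H}^1_{\mathrm{B}}(A,\mathbb{Q}))$ lie in $\gdrbmath(A)$. Such a $\lambda$ acts on $\bigl(\mathrm{H}^1_{\mathrm{B}}(A,\mathbb{Q})^{\otimes 2}\bigr)^{\oplus 2}$, and hence on $\mathrm{H}^2_{\mathrm{B}}(X,\mathbb{Q})$, as the scalar $\lambda^2$. The image of $\gmmath$ under $\lambda\mapsto\lambda^2$ is all of $\gmmath$, since this map is surjective as a morphism of algebraic groups. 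Therefore $\gdrbmath(\mathrm{H}^2(X))$ contains every homothety of $\mathrm{H}^2_{\mathrm{B}}(X,\mathbb{Q})$.

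I expect the main obstacle to be Step 1. One must check that the Kuga--Satake map is genuinely a morphism of dRB structures over $\qbar$ with the correct weight, rather than only up to a Tate twist that would make homotheties act trivially. In particular, the twisted form $\mathrm{End}(\mathrm{H}^1)$ is useless here, because homotheties act trivially on it. So the argument must be set up with the polarization class of $A$ so that the comparison isomorphisms match on both sides. Once that is in place, Steps 2 and 3 are formal.
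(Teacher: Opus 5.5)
Your proposal is correct and follows essentially the same route as the paper: embed $\mathrm{H}^2_{\mathrm{dRB}}(X,\mathbb{Q})$ via Kuga--Satake into a weight-two tensor construction on $\drba$, use that $\gdrbmath(\mathrm{H}^2(X))$ is the image of $\gdrbmath(A)$ under restriction, and note that the homotheties from Theorem~\ref{introthm1} act there by $\lambda^{2}$ (the paper writes $t\mapsto t^{-2}$ via $\tilde{\Phi}$), whose image is all of $\gmmath$. Your separate handling of the line $\mathbb{Q}h\cong\mathbb{Q}_{\mathrm{dRB}}(-1)$ and of a possible Tate twist via the polarization of $A$ simply spells out what the paper condenses into the phrase ``direct summand of $\drba\otimes\drba$''.
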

The proof of Theorem \ref{introthm1} uses the interaction between the de Rham-Betti group and the Mumford-Tate group of an abelian variety (see Section \ref{drbsection} for a summary of properties of these two groups) as well as properties of CM-fields.

Invoking Theorem \ref{introthm1}, we can also define the \textit{de Rham-Betti Hodge group} of an abelian variety (see Definition \ref{gdrbhdefn}), which we denote by $\gdrbhmath(A)$. It plays the role of the Hodge group in the de Rham-Betti setting. This paves the way for explicitly determining the dRB groups of some special cases of abelian varieties defined over $\qbar$ in a forthcoming paper by the author.

Another curious observation is that there are uncountably many isomorphism classes of one-dimensional dRB structures, essentially because there are uncountably many transcendental numbers in $\mathbb{C}$. This is starkly different from the theory of Hodge structures. Using Theorem \ref{introthm1}, we can translate the problem of determining one-dimensional dRB structures in the category $\langle\drba\rangle^{\otimes}$ into a group theoretic statement.

\begin{corollary}[=Corollary \ref{groupinterpret}]
Every one-dimensional dRB structure in $\langle\drba\rangle^{\otimes}$ is of the form $\mathbb{Q}_{\mathrm{dRB}}(k)$ for some $k\in\mathbb{Z}$ if and only if $\gmmath\cap\gdrbhmath(A)=\{\pm1\}\subset\mathrm{GL}(\betti)$.   
\end{corollary}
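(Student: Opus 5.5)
The plan is to translate one-dimensional objects of $\langle\drba\rangle^{\otimes}$ into characters of $\gdrbmath(A)$, and then to compute the character group from the decomposition of $\gdrbmath(A)$ furnished by Theorem \ref{thmgminside} and Definition \ref{gdrbhdefn}. The paper's text is cut off before that definition, so one property of $\gdrbhmath(A)$ below is assumed rather than quoted.

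\textbf{Step 1 (Tannakian dictionary).} By Tannakian duality, $\langle\drba\rangle^{\otimes}$ is equivalent to $\mathrm{Rep}_{\mathbb{Q}}(\gdrbmath(A))$. Hence isomorphism classes of one-dimensional dRB structures in it correspond bijectively to the character group $X^*(\gdrbmath(A))$.

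\textbf{Step 2 (the twists).} A polarization on $A$ gives a line in $\mathrm{H}^2_{\mathrm{B}}(A,\mathbb{Q})=\wedge^2\betti$ isomorphic to $\mathbb{Q}_{\mathrm{dRB}}(-1)$. So every $\mathbb{Q}_{\mathrm{dRB}}(k)$ lies in the category, corresponding to a character $\chi^k$ for a fixed $\chi$. A homothety $t$ acts on $\wedge^2\betti$ by $t^2$, so $\chi$ restricts to the homotheties as $t\mapsto t^{\pm 2}$. In particular the twists give an infinite cyclic subgroup $\langle\chi\rangle$, and the left-hand condition of the corollary says exactly that $X^*(\gdrbmath(A))=\langle\chi\rangle$.

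\textbf{Step 3 (decomposition and characters).} By Theorem \ref{thmgminside}, $\gmmath\subset\gdrbmath(A)$, and the multiplication map
$$\gmmath\times\gdrbhmath(A)\to\gdrbmath(A)$$
should be an isogeny with kernel the antidiagonal copy of $\gmmath\cap\gdrbhmath(A)$. Homotheties are central, so this is a central quotient.

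\emph{Assumed property.} $\gdrbhmath(A)$ admits no nontrivial character that extends to $\gdrbmath(A)$. This is the analogue of the Hodge group being the kernel of the weight and twist data. I expect it to come directly from Definition \ref{gdrbhdefn}, but it must be checked there; this is the main obstacle.

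Granting it, a character of $\gdrbmath(A)$ is determined by its restriction $t\mapsto t^a$ to $\gmmath$, and it is trivial on $\gdrbhmath(A)$. Such a pair descends to $\gdrbmath(A)$ precisely when $t^a$ is trivial on $\gmmath\cap\gdrbhmath(A)$. This intersection is a finite subgroup of $\gmmath$, namely some $\mu_n$. Two facts pin down $n$:
\begin{itemize}
\item $n$ is finite: otherwise $\gdrbmath(A)=\gdrbhmath(A)$, which would kill $\chi$.
\item $n$ is even: $-1$ acts trivially on $\wedge^2\betti$, and should lie in $\gdrbhmath(A)$ by the definition.
\end{itemize}
Hence
$$X^*(\gdrbmath(A))\cong\{a\in\mathbb{Z}: \mu_n\subset\ker(t^a)\}=n\mathbb{Z},$$
while under this identification $\langle\chi\rangle$ corresponds to $2\mathbb{Z}$.

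\textbf{Step 4 (conclusion).} $X^*(\gdrbmath(A))=\langle\chi\rangle$ holds iff $n\mathbb{Z}=2\mathbb{Z}$, i.e. iff $\gmmath\cap\gdrbhmath(A)=\mu_2=\{\pm1\}$. Both directions follow from this equality, given the assumed property in Step 3.

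If the definition does not give that property outright, I would fall back on two ingredients: the inclusion $\gdrbmath(A)\subset\mathrm{MT}(A)$, and the description of characters of $\mathrm{MT}(A)$ through its quotient by the Hodge group.
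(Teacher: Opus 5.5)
Your argument follows the paper's architecture. One-dimensional objects of $\langle\drba\rangle^{\otimes}$ are $\mathbb{Q}$-rational characters of $\gdrbmath(A)$. The surjection $\gmmath\times\gdrbhmath(A)\to\gdrbmath(A)$ then reduces a character to its weight on the homotheties, once it is known to kill $\gdrbhmath(A)$. Finally, the polarization character has weight $\pm2$ there (Corollary \ref{gmweights}). Computing the whole character group as $n\mathbb{Z}$ and comparing it with $2\mathbb{Z}$ is a clean repackaging of the paper's two directions.

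Your assumed property, however, is not definitional. It is Lemma \ref{chartrivial}, and the paper proves the stronger statement that $\gdrbhmath(A)$ has no nontrivial character at all. Its proof needs $\mathrm{Z}(\gdrbhmath(A))^{\circ}\subset\mathrm{Z}(\mathrm{Hdg}(A))^{\circ}\subset\prod_i\mathrm{U}_{E_i}$, which comes from $\mathrm{End}_{\mathrm{dRB}}=\mathrm{End}_{\mathrm{Hdg}}$ (Theorem \ref{mainfact}). It then uses Lemma \ref{surjectiontotorus} for the reductive group $\gdrbhmath(A)$ and Lemma \ref{nosurjectiontogm}. Your fallback via $\gdrbmath(A)\subset\mathrm{MT}(A)$ would not suffice, because a character of the subgroup $\gdrbmath(A)$ need not extend to $\mathrm{MT}(A)$; the real input is control of the centre. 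Everything must also be phrased with characters defined over $\mathbb{Q}$, since over $\qbar$ the tori $\mathrm{U}_{E}$ have many characters and the property can fail.

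The one incorrect step is the bullet ``$n$ is even''. That $-1$ acts trivially on $\wedge^{2}\betti$ only shows $-1\in\ker\chi$. Unwinding Definition \ref{gdrbhdefn} with Theorem \ref{thmgminside} gives $\Phi^{-1}(\gdrbmath(A))=\gmmath\times(\mathrm{Hdg}(A)\cap\gdrbmath(A))$, hence $\gdrbhmath(A)=(\mathrm{Hdg}(A)\cap\gdrbmath(A))^{\circ}$. The homothety $-1$ lies in $\mathrm{Hdg}(A)\cap\gdrbmath(A)$, but nothing forces it into the identity component. In fact $-1\in\gdrbhmath(A)$ is literally the right-hand side of the corollary. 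If it followed from the definition, Conjecture \ref{onedimconj} would be a theorem (compare the closing remark on $\sqrt{2\pi\sqrt{-1}}$).

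What $\chi$ actually yields is the reverse divisibility. For $\zeta\in\gmmath\cap\gdrbhmath(A)$ one has both $\chi(\zeta)=\zeta^{\pm2}$ and $\chi(\zeta)=1$, so $n\mid 2$; this is the first paragraph of the paper's proof. Your Step 4 only uses $X^{*}\cong n\mathbb{Z}$ and $\langle\chi\rangle\leftrightarrow 2\mathbb{Z}$, never evenness. So replacing the bullet by $n\mid 2$ and citing Lemma \ref{chartrivial} gives a complete proof.
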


\subsection{Acknowledgments}
This is based on the first part of my PhD thesis (\cite{ji2025rhambettigroupstypeiv} or \cite{ji2026thesis}), supported by an NWO cluster grant with project number 613.009.153. I am very grateful to my advisor Mingmin Shen for introducing this topic to me and for helpful discussions. I have also benefited greatly from discussions with Charles Vial, Tobias Kreutz and Javier Frésan. I would also like to thank Charles Vial for providing useful comments on the draft. Tobias Kreutz passed away in August 2024. This article is deeply inspired by the insights of Tobias concerning the Tannakian category of de Rham-Betti structures.

\subsection{Notation and Conventions}
\begin{enumerate}
    \item We fix an algebraic closure $\qbar$ of $\mathbb{Q}$ and an embedding of fields $\qbar\xhookrightarrow{}\mathbb{C}$.
    \item Given an abelian variety $A$, we denote by $\mathrm{End}^{\circ}(A)$ its endomorphism algebra $\mathrm{End}(A)\otimes_{\mathbb{Z}}\mathbb{Q}$. 
    \item We fix a square root of $-1$ and denote it by $\sqrt{-1}$.
\end{enumerate}

\section{De Rham-Betti Groups and Mumford-Tate Groups}\label{drbsection}
In this section, we summarize some properties of de Rham-Betti groups and Mumford-Tate groups. For de Rham-Betti theory, more details can be found in \cite[Section 7.5.3]{andre2004introduction}, \cite{bost2016some} and \cite[Section 2]{ksv2026rhambetticlassescoefficients}. For the theory of Mumford-Tate groups, more details can be found in \cite{moonen2004introduction}.

A \textit{de Rham-Betti structure} is a triple of the form $(V_{\mathrm{dR}},V_{\mathrm{B}},\rho_{m})$ such that $V_{\mathrm{B}}$ is a finite-dimensional $\mathbb{Q}$-vector space, $V_{\mathrm{dR}}$ is a finite-dimensional $\qbar$-vector space, and $\rho_{m}$ is an isomorphism of $\mathbb{C}$-vector spaces $\rho_{m}: V_{\mathrm{B}} \otimes_{\mathbb{Q}}\mathbb{C} \cong V_{\mathrm{dR}} \otimes_{\qbar} \mathbb{C}$. An element $\alpha \in V_{\mathrm{B}}$ is a \textit{de Rham-Betti class} if $\rho_{m}(\alpha\otimes 1)\in V_{\mathrm{dR}}\otimes 1$. In the sequel, we sometimes abbreviate the adjective `de Rham-Betti' as `dRB'. Given a one-dimensional dRB structure $(V_{\mathrm{dR}},V_{\mathrm{B}},\rho_{m})$, if we can find a $\mathbb{Q}$-basis for $V_{\mathrm{B}}$ and a $\qbar$-basis for $V_{\mathrm{dR}}$ such that $\rho_{m}$ is scalar multiplication by $(2\pi \sqrt{-1})^{k}$, then we denote this dRB structure by $\mathbb{Q}_{\mathrm{dRB}}(k)$.

In \cite[Section 7.1.6]{andre2004introduction} and \cite[Section 2]{ksv2026rhambetticlassescoefficients}, it is explained that the category of all dRB structures, denoted by $\mathcal{C}_{\mathrm{dRB}}$, has a natural tensor structure. Together with the forgetful functor $\omega_{\mathrm{B}}$ to the category of $\mathbb{Q}$-vector spaces which remembers only $V_{\mathrm{B}}$ of a dRB structure, it forms a neutral Tannakian category.

Given a specific dRB structure $V$, we denote the Tannakian category generated by $V$ in $\mathcal{C}_{\mathrm{dRB}}$ by $\langle V\rangle^{\otimes}$. Then the \textit{de Rham-Betti group} of $V$ is defined as $\underline{\mathrm{Aut}}(\omega_{\mathrm{B}}|_{\langle V\rangle^{\otimes}})$ (see \cite[Page 20]{deligne2012tannakian} for a definition). We denote it by $\gdrbmath(V)$. It is an algebraic subgroup of $\mathrm{GL}(V_{\mathrm{B}})$. 
\begin{example}
Let $A$ be an abelian variety defined over $\qbar$. Then we have the famous Grothendieck comparison isomorphism $\rho_{m}: \mathrm{H}^{1}_{\mathrm{B}}(A,\mathbb{Q})\otimes_{\mathbb{Q}} \mathbb{C} \cong \mathrm{H}^{1}_{\mathrm{dR}}(A/\qbar) \otimes_{\qbar} \mathbb{C}$. We abbreviate the dRB structure $(\mathrm{H}^{1}_{\mathrm{dR}}(A/\qbar), \mathrm{H}^{1}_{\mathrm{B}}(A,\mathbb{Q}), \rho_{m})$ as $\mathrm{H}^{1}_{\mathrm{dRB}}(A,\mathbb{Q})$.
\end{example}
In the context of the above example, we denote $\gdrbmath(A):=\gdrbmath(\drba)$. The following theorem is fundamental for the subsequent analysis.
\begin{theorem}[\cite{ksv2026rhambetticlassescoefficients}, Theorem 2.6 and Theorem 5.12]\label{gdrbconnectedness}
For an arbitrary dRB structure $V$, the algebraic group $\gdrbmath(V)$ is connected. For an abelian variety $A$ defined over $\qbar$, $\gdrbmath(A)$ is a reductive group.
\end{theorem}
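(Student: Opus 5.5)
The plan has two parts. For connectedness, I adapt the usual argument for Mumford--Tate groups, using the torsor of comparison isomorphisms. For reductivity, I reduce to showing that $\mathrm{H}^1_{\mathrm{dRB}}(A,\mathbb{Q})$ is a semisimple object, and get this from transcendence results together with Poincar\'e reducibility.

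\emph{Connectedness.} Let $G=\gdrbmath(V)$ and write $\langle V\rangle^{\otimes}\simeq \mathrm{Rep}_{\mathbb{Q}}(G)$.
\begin{enumerate}
\item The functor $\omega_{\mathrm{dR}}$, which sends an object to its $\qbar$-vector space, is a fiber functor over $\qbar$ on this category. So $P=\underline{\mathrm{Isom}}^{\otimes}(\omega_{\mathrm{B}}\otimes\qbar,\omega_{\mathrm{dR}})$ is a $G_{\qbar}$-torsor over $\qbar$. The comparison isomorphisms $\rho_m$, taken on all objects simultaneously, give a point $\rho_m\in P(\mathbb{C})$.
\item Let $G^{\circ}$ be the identity component. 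Then $G/G^{\circ}$ is a finite étale $\mathbb{Q}$-scheme. Its coordinate ring $W=\mathcal{O}(G/G^{\circ})$, with the regular action of $G$, is an object of $\langle V\rangle^{\otimes}$; concretely, it is a finite-dimensional algebra object in dRB structures.
\item The quotient $P/G^{\circ}$ is a finite $\qbar$-scheme, hence a disjoint union of $\qbar$-points. So the image of $\rho_m$ in $(P/G^{\circ})(\mathbb{C})$ is already a $\qbar$-point.
\item Unwinding this through $W$: the map $\rho_m$ on $W$ sends $\omega_{\mathrm{B}}(W)\otimes\qbar$ onto $\omega_{\mathrm{dR}}(W)$. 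In other words, every element of $\omega_{\mathrm{B}}(W)$ is a dRB class.
\item dRB classes are exactly the $G$-invariants. Hence $G$ acts trivially on $\mathcal{O}(G/G^{\circ})$, which forces $G=G^{\circ}$.
\end{enumerate}
I expect the only delicate point to be the identification in step 4: checking that a $\qbar$-point of $P/G^{\circ}$ translates into rationality of $\rho_m$ on $W$.

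\emph{Reductivity.} A linear algebraic group in characteristic $0$ with a faithful semisimple representation is reductive. The reason is that its unipotent radical has nonzero fixed vectors in every representation, so it acts trivially on each simple summand, hence trivially on the faithful representation, hence is trivial. The task is therefore to show that $\drba$ is a semisimple object of $\mathcal{C}_{\mathrm{dRB}}$, i.e.\ that every dRB sub-structure has a dRB complement.
\begin{enumerate}
\item \emph{Sub-structures come from abelian subvarieties.} Let $V'\subset \drba$ be a sub-structure. I will use the Wüstholz analytic subgroup theorem, in the form used by Bost and Bost--Charles. It says that a $\mathbb{Q}$-subspace of $\mathrm{H}_1$ whose comparison is compatible with a $\qbar$-subspace of the Lie algebra arises from an abelian subvariety $B\subset A$. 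Dually, $V'$ is the kernel of restriction $\mathrm{H}^1(A)\to\mathrm{H}^1(B)$, or the image of $\mathrm{H}^1(A/B)$.
\item \emph{Complements.} By Poincaré reducibility there is $B'$ with $B\times B'\to A$ an isogeny. The induced decomposition of $\mathrm{H}^1$ is algebraic, coming from correspondences defined over $\qbar$, hence is compatible with $\rho_m$. So $V'$ has a dRB complement.
\end{enumerate}

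The main obstacle is step 1 of the reductivity argument. It is the only place where genuine transcendence input enters, and one must check carefully that the analytic subgroup theorem applies to an arbitrary dRB sub-structure of $\drba$, not merely to rank-one ones.
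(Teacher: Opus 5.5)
Your proposal is correct and follows the route the paper indicates. The paper gives no proof of its own: it cites \cite{ksv2026rhambetticlassescoefficients} and remarks only that connectedness is Tannakian and reductivity comes from W\"ustholz. Your period-torsor argument with the object $\mathcal{O}(G/G^{\circ})$ handles connectedness, and semisimplicity of $\mathrm{H}^1_{\mathrm{dRB}}(A,\mathbb{Q})$ via the analytic subgroup theorem plus Poincar\'e reducibility handles reductivity.

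For the step you flag, work in the universal vector extension $E(A)$, whose Lie algebra is $\mathrm{H}_1^{\mathrm{dR}}(A/\overline{\mathbb{Q}})$; working in $\mathrm{Lie}(A)$ alone would not suffice. Apply the theorem to each period in $(V')^{\perp}\subset \mathrm{H}_1$ separately and sum the resulting connected subgroups. This handles sub-structures of any rank and yields $E(B)$ for an abelian subvariety $B\subset A$.
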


\begin{remark}
    The first property follows from the Tannakian formalism of dRB structures while the second is a corollary of the deep ``Analytic Subgroup Theorem" of Wüstholtz (\cite{wustholz1989algebraische}). See \cite[Section 5]{ksv2026rhambetticlassescoefficients} for a more detailed exposition.
\end{remark}

Another key property concerning the dRB group of an abelian variety is the following, which is also a corollary of the Analytic Subgroup Theorem.
\begin{theorem}[\cite{bost2016some}, Theorem 3.8]\label{mainfact}
Given an abelian variety $A$ defined over $\qbar$,
we have that $\mathrm{End}_{\mathrm{dRB}}(\betti)=\mathrm{End}_{\mathrm{Hdg}}(\betti)\cong\mathrm{End}^{\circ}(A)$.
\end{theorem}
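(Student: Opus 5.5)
The plan is to prove the two equalities separately. The isomorphism $\mathrm{End}_{\mathrm{Hdg}}(\betti)\cong\mathrm{End}^{\circ}(A)$ is classical. Endomorphisms of $A$ act on $\betti$ preserving the Hodge decomposition. Conversely, a $\mathbb{Q}$-endomorphism of $\betti$ preserving the Hodge structure preserves the lattice up to isogeny and commutes with the complex structure on $\mathrm{Lie}(A_{\mathbb{C}})$. It therefore descends to an endomorphism of the complex torus in $\mathrm{End}^{\circ}(A_{\mathbb{C}})$, and this equals $\mathrm{End}^{\circ}(A)$ because endomorphisms of $A_{\mathbb{C}}$ are defined over $\qbar$ when $A$ is. So the real content is $\mathrm{End}_{\mathrm{dRB}}(\betti)=\mathrm{End}^{\circ}(A)$.

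For the inclusion $\mathrm{End}^{\circ}(A)\subseteq\mathrm{End}_{\mathrm{dRB}}$, let $f\in\mathrm{End}(A)$, which is defined over $\qbar$. Then $f$ acts on $\mathrm{H}^1_{\mathrm{dR}}(A/\qbar)$ and on $\betti$. Since Grothendieck's comparison $\rho_m$ is functorial in $\qbar$-morphisms, these two actions are intertwined. Hence $f^{*}$, viewed as an element of $\mathrm{End}(\betti)$, is a dRB class of the internal $\mathrm{End}$ dRB structure.

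For the converse, dualize to $\mathrm{H}_1$ and let $\varphi\in\mathrm{End}(\mathrm{H}_1(A,\mathbb{Q}))$ be dRB. Set $B=A\times A$ and let $\Gamma\subset \mathrm{H}_1(B,\mathbb{Q})$ be the graph of $\varphi$. Then $\Gamma$ is a dRB sub-structure of $\mathrm{H}_1(B)$ of rank $2g$, with $g=\dim A$. In particular $\Gamma_{\mathbb{C}}$ is the complexification of a $\qbar$-subspace of $\mathrm{H}_1^{\mathrm{dR}}(B/\qbar)$. Hence its image $W$ under the $\qbar$-linear surjection $\mathrm{H}_1^{\mathrm{dR}}(B/\qbar)\to\mathrm{Lie}(B)$ is a $\qbar$-subspace of $\mathrm{Lie}(B)$. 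Every $\gamma\in\Gamma\cap \mathrm{H}_1(B,\mathbb{Z})$ is a period: $\exp_B(\gamma)=0$ is algebraic, and $\gamma\in W_{\mathbb{C}}$. Wüstholz's Analytic Subgroup Theorem then gives an abelian subvariety $C_\gamma\subseteq B$ over $\qbar$ with $\gamma\in\mathrm{Lie}(C_\gamma)\subseteq W$. Let $C$ be the sum of the $C_\gamma$. Then $\mathrm{Lie}(C)\subseteq W$ and $\Gamma\subseteq \mathrm{H}_1(C,\mathbb{Q})$. Since $\mathrm{Lie}(C)$ is a complex subspace containing $\Gamma_{\mathbb{R}}$, it contains the $\mathbb{C}$-span of $\Gamma_{\mathbb{R}}$, which is $W$. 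So $\mathrm{Lie}(C)=W$.

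The main obstacle is showing $\mathrm{H}_1(C,\mathbb{Q})=\Gamma$. Equivalently, $\dim_{\mathbb{C}}W=g$, i.e.\ $\Gamma_{\mathbb{R}}$ is already a complex subspace of $\mathrm{Lie}(B)$. My first attempt is a symmetry argument applied to $\Gamma$ together with a dRB complement. The graph of $-\varphi$ fails as a complement when $\varphi=0$, so I would instead take the complementary graph $\{(x,y):x=0\}$, which is trivially dRB and is the $\mathrm{H}_1$ of the abelian subvariety $0\times A$. Applying the argument above to both $\Gamma$ and this complement yields abelian subvarieties $C$ and $C'=0\times A$. Their Lie algebras are the images $W,W'$, and these sum to $\mathrm{Lie}(B)$ because $\Gamma\oplus(0\oplus \mathrm{H}_1(A))=\mathrm{H}_1(B)$. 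Comparing dimensions ($\dim C\ge g$ since $\mathrm{rk}\,\mathrm{H}_1(C)\ge 2g$, and $\dim C+\dim C'\le 2g$ would follow if $W\cap W'=0$) should force $\dim C=g$, hence $\mathrm{H}_1(C,\mathbb{Q})=\Gamma$. The point needing care is the transversality $W\cap W'=0$. If it fails, I would replace $\varphi$ by $\varphi+n\cdot\mathrm{id}$ for suitable $n\in\mathbb{Z}$, which stays dRB, and argue for generic $n$. Once $\mathrm{H}_1(C,\mathbb{Q})=\Gamma$, the first projection $C\to A$ is an isogeny because $\Gamma$ is a graph. Composing its inverse in the isogeny category with the second projection gives an element of $\mathrm{End}^{\circ}(A)$ inducing $\varphi$, which completes the proof.
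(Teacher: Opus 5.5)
There is a genuine gap at the step you flag as the main obstacle, and the fix you propose is circular. Your first two steps are fine, as is the construction of an abelian subvariety $C\subseteq B$ with $\Gamma\subseteq \mathrm{H}_1(C,\mathbb{Q})$ and $\mathrm{Lie}(C)=W$.

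Let $\pi\colon \mathrm{H}_1^{\mathrm{dR}}(A/\overline{\mathbb{Q}})\to \mathrm{Lie}(A)$ be the projection, $F=\ker\pi$, and let $\varphi$ also denote the de Rham realization. Then $W=\{(\pi x,\pi\varphi x)\}$ and $W'=0\oplus\mathrm{Lie}(A)$, so $W\cap W'=0\oplus\pi(\varphi(F))$. Hence $W\cap W'=0$ holds if and only if $\varphi(F)\subseteq F$. For rational $\varphi$, this is exactly the statement that $\varphi$ preserves the Hodge decomposition, i.e.\ the theorem itself. Replacing $\varphi$ by $\varphi+n\cdot\mathrm{id}$ cannot help, since $(\varphi+n)(F)\subseteq F$ if and only if $\varphi(F)\subseteq F$.

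The underlying problem is that applying W\"ustholz's theorem to $B$ only sees $\Gamma^{\mathrm{dR}}$ modulo $F_B$: the periods, not the quasi-periods. This information is genuinely insufficient. Take an elliptic curve $A$ and a hypothetical dRB $\varphi$ with $\varphi(F)\not\subseteq F$. Then $\Gamma^{\mathrm{dR}}\cap F_B=0$, so $W=\mathrm{Lie}(B)$ and $C=B$, and nothing is contradicted. Passing to the annihilator of $\Gamma$, viewed in $\mathrm{H}_1(B^{\vee})$, only reproduces the same dimension count.

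The missing idea is to apply the Analytic Subgroup Theorem to the universal vector extension $E(B)$ of $B$. Its Lie algebra is all of $\mathrm{H}_1^{\mathrm{dR}}(B/\overline{\mathbb{Q}})$, and its exponential has kernel $\mathrm{H}_1(B,\mathbb{Z})$.

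Your argument, run in $E(B)$, produces a connected algebraic subgroup $H\subseteq E(B)$ with $\mathrm{Lie}(H)=\Gamma^{\mathrm{dR}}$, so $\dim H=\mathrm{rk}\,\Gamma$. Now $H$ is an extension of an abelian subvariety $C\subseteq B$ by a vector group on which the exponential is injective. Hence $\ker\exp_H=\Gamma\cap\mathrm{H}_1(B,\mathbb{Z})$ embeds into $\mathrm{H}_1(C,\mathbb{Z})$, giving $\dim H\le 2\dim C$.

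On the other hand, $E(C)\hookrightarrow E(B)$ because $\mathrm{H}^1(B,\mathcal{O})\to\mathrm{H}^1(C,\mathcal{O})$ is surjective. The universal property of universal vector extensions then forces $E(C)\subseteq H$, so $\dim H\ge 2\dim C$. Therefore $H=E(C)$ and $\Gamma=\mathrm{H}_1(C,\mathbb{Q})$, after which your isogeny argument goes through.

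The paper itself gives no proof; it imports the statement from Bost--Charles as a consequence of the Analytic Subgroup Theorem.
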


The Mumford-Tate group is the fundamental symmetry group associated with a Hodge structure. We refer the reader to \cite{moonen2004introduction} for more details. Given a $\mathbb{Q}$-Hodge structure $V$ of weight $k$, i.e., $V \otimes_{\mathbb{Q}} \mathbb{C}=\bigoplus_{p+q=k}V^{p,q}$ such that $\overline{V^{p,q}}=V^{q,p}$, one can associate $V$ with an $\mathbb{R}$-representation of the Deligne torus $\mu:\mathbb{S}:=\mathrm{Res}_{\mathbb{C}/\mathbb{R}}\gmmath \rightarrow \mathrm{GL}(V_{\mathbb{R}})$ which is defined as follows. Given $z \in \mathbb{S}(\mathbb{R})=\mathbb{C}^{\times}$, its image under $\mu$ acts via scalar multiplication by $z^{-p}\overline{z}^{-q}$ on the summand $V^{p,q}$.

The \textit{Mumford-Tate group} $\mathrm{MT}(V)$ of a $\mathbb{Q}$-Hodge structure $V$ is the smallest $\mathbb{Q}$-algebraic subgroup of $\mathrm{GL}(V)$ whose set of $\mathbb{R}$-points contains the image of $\mu$. The \textit{Hodge group} of $V$, denoted by $\mathrm{Hdg}(V)$, is the smallest $\mathbb{Q}$-algebraic subgroup of $\mathrm{GL}(V)$ whose set of $\mathbb{R}$-points contains the image of the set $\{z \in \mathbb{C}^{\times}|z\overline{z}=1\}$ under $\mu$. These definitions yield the following well-known lemma.
\begin{lemma}\label{hodgetimesgm}
Let $V$ be a Hodge structure of weight $k$ where $k\neq0$. Then $\mathrm{MT}(V)$ contains the subgroup of homotheties in $\mathrm{GL}(V)$. Moreover, there is an isogeny $\Phi: \gmmath \times \mathrm{Hdg}(V) \rightarrow \mathrm{MT}(V)$ which takes $(t,g) \in \gmmath \times \mathrm{Hdg}(V)$ to $t^{-1}g \in \mathrm{MT}(V)$.
\end{lemma}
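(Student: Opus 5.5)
We need to prove Lemma \ref{hodgetimesgm}. The plan is to work directly from the definition of $\mu$, treating the homothety statement and the isogeny statement in turn.

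\textbf{Homotheties lie in $\mathrm{MT}(V)$.} Restrict $\mu$ to the real points $t\in\mathbb{R}^{\times}\subset\mathbb{C}^{\times}=\mathbb{S}(\mathbb{R})$. On each summand $V^{p,q}$ with $p+q=k$, the element $\mu(t)$ acts by $t^{-p}t^{-q}=t^{-k}$, so $\mu(t)$ is the homothety $t^{-k}\cdot\mathrm{id}$ of $V_{\mathbb{R}}$.
\begin{itemize}
\item Since $k\neq 0$, the map $t\mapsto t^{-k}$ has image in $\mathbb{R}^{\times}$ containing $\mathbb{R}_{>0}$, which is Zariski dense in $\mathbb{G}_{\mathrm{m},\mathbb{R}}$.
\item So $\mathrm{MT}(V)(\mathbb{R})$ contains a Zariski dense subset of the homothety torus $\mathbb{G}_{\mathrm{m}}\cdot\mathrm{id}\subset\mathrm{GL}(V)$.
\item That torus is defined over $\mathbb{Q}$ and $\mathrm{MT}(V)$ is Zariski closed, so $\mathbb{G}_{\mathrm{m}}\cdot\mathrm{id}\subset\mathrm{MT}(V)$.
\end{itemize}

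\textbf{The map $\Phi$ is a surjective homomorphism.} Homotheties are central in $\mathrm{GL}(V)$, so $\Phi(t,g)=t^{-1}g$ is a homomorphism of $\mathbb{Q}$-groups. Its image lies in $\mathrm{MT}(V)$ by the first step together with $\mathrm{Hdg}(V)\subset\mathrm{MT}(V)$; the latter holds because $\mathrm{MT}(V)(\mathbb{R})$ contains all of $\mu(\mathbb{C}^{\times})$, hence $\mu(\mathbb{S}^1)$, and $\mathrm{Hdg}(V)$ is the smallest group with that property. For surjectivity:
\begin{itemize}
\item Every $z\in\mathbb{C}^{\times}$ factors as $z=r\cdot u$ with $r=|z|\in\mathbb{R}_{>0}$ and $u\in\mathbb{S}^1$.
\item Then $\mu(z)=\mu(r)\mu(u)=r^{-k}\mu(u)$, which lies in $\Phi\bigl((\gmmath\times\mathrm{Hdg}(V))(\mathbb{R})\bigr)$.
\item The image of $\Phi$ is a closed $\mathbb{Q}$-subgroup (images of algebraic group homomorphisms are closed) whose real points contain $\mu(\mathbb{C}^{\times})$.
\item By minimality of $\mathrm{MT}(V)$, this image is all of $\mathrm{MT}(V)$.
\end{itemize}

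\textbf{The kernel is finite.} If $t^{-1}g=1$, then $g=t\cdot\mathrm{id}\in\mathrm{Hdg}(V)$. On $\mathbb{S}^1$ we have $z\bar z=1$, so $\mu(z)$ acts on $V^{p,q}$ by $z^{-p}\bar z^{-q}=z^{q-p}$. Hence $\mu(\mathbb{S}^1)$ lies in the $\mathbb{Q}$-algebraic subgroup $\mathrm{SL}^{\pm}$-type group $\{g : \det g=\pm1\}$. Concretely, $\det\mu(z)=z^{\sum(q-p)\dim V^{p,q}}$, and this exponent is $0$ because the symmetry $\dim V^{p,q}=\dim V^{q,p}$ makes the contributions cancel. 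So $\mathrm{Hdg}(V)\subset\mathrm{SL}(V)$. Consequently a homothety $t\cdot\mathrm{id}$ lies in $\mathrm{Hdg}(V)$ only if $t^{\dim V}=1$, and the kernel is contained in $\{(t,t\cdot\mathrm{id}) : t^{\dim V}=1\}\cong\mu_{\dim V}$, which is finite.

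Combining the three steps, $\Phi$ is a surjective homomorphism with finite kernel, i.e.\ an isogeny. The main obstacle is the minimality arguments, which require closedness of images of algebraic homomorphisms; this is standard. The degenerate case $V=0$ is trivial.
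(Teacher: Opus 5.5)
Your argument is correct and is the routine verification from the definitions that the paper has in mind; the paper writes out no proof and only remarks that the lemma is well known and follows from the definitions of $\mu$, $\mathrm{MT}(V)$ and $\mathrm{Hdg}(V)$. One small correction: $V=0$ is not a trivial case but must be excluded, since then $\Phi\colon\mathbb{G}_{\mathrm{m}}\times\{1\}\to\{1\}$ has kernel all of $\mathbb{G}_{\mathrm{m}}$ (your bound $t^{\dim V}=1$ becomes vacuous), so the lemma tacitly assumes $V\neq 0$.
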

One can also define the Mumford-Tate group of a Hodge structure $V$ in the Tannakian formalism. By \cite{deligne2012tannakian}, the category of $\mathbb{Q}$-Hodge structures together with the forgetful functor $\omega$, which remembers only the underlying $\mathbb{Q}$-vector space, forms a neutral Tannakian category. Then the Mumford-Tate group of $V$ is in fact equal to $\underline{\mathrm{Aut}}(\omega|_{\langle V\rangle^{\otimes}})$. The following theorem from \cite{ksv2026rhambetticlassescoefficients} establishes a relation between the de Rham-Betti group and the Mumford-Tate group of an abelian variety defined over $\qbar$. 
\begin{theorem}[\cite{ksv2026rhambetticlassescoefficients}, Theorem 5.12]\label{keyinclusion}
    For an abelian variety $A$ defined over $\qbar$, we have that $\gdrbmath(A)\xhookrightarrow{}\mathrm{MT}(A)$ as subgroups of $\mathrm{GL}(\betti)$.
\end{theorem}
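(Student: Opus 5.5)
We plan a Tannakian argument: describe $\mathrm{MT}(A)$ as the stabilizer of a family of tensors, and show that $\gdrbmath(A)$ fixes each of them. For integers $m,n\geq 0$ and $p\in\mathbb{Z}$, put
$$T^{m,n}(p):=\betti^{\otimes m}\otimes \mathrm{H}^1(A,\mathbb{Q})^{\vee\otimes n}\otimes\mathbb{Q}(p).$$
This space carries a natural $\mathbb{Q}$-Hodge structure. It also carries a natural dRB structure, built from $\drba$, its dual and $\mathbb{Q}_{\mathrm{dRB}}(p)$ inside $\langle\drba\rangle^{\otimes}$. The Tate twists are normalized so that $\mathbb{Q}(1)$ corresponds to $\mathbb{Q}_{\mathrm{dRB}}(1)$, whose comparison is multiplication by $2\pi\sqrt{-1}$. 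The first step is to check that $\mathbb{Q}_{\mathrm{dRB}}(1)$ lies in $\langle\drba\rangle^{\otimes}$. For this we use $\mathrm{H}^2_{\mathrm{dRB}}$ of $A$ and the polarization class: it is algebraic, so its Betti class is carried into $(2\pi\sqrt{-1})^{-1}\mathrm{H}^2_{\mathrm{dR}}(A/\qbar)$. Hence every $\gdrbmath(A)$-action on $T^{m,n}(p)$ is defined.

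The second step is the standard description of the Mumford-Tate group, as in Deligne's ``Hodge cycles on abelian varieties'', Prop.~3.4. Since $\mathrm{MT}(A)$ is reductive, it equals the subgroup of $\mathrm{GL}(\betti)$ fixing all Hodge classes, i.e.\ classes of type $(0,0)$, in all the spaces $T^{m,n}(p)$. Consequently, to prove $\gdrbmath(A)\subset\mathrm{MT}(A)$ it suffices to show that every Hodge class $t\in T^{m,n}(p)$ is fixed by $\gdrbmath(A)$. By the Tannakian definition of $\gdrbmath(A)$, this holds once $t$ is a de Rham-Betti class, i.e.\ once $\rho_m(t\otimes 1)$ lies in the $\qbar$-structure $T^{m,n}(p)_{\mathrm{dR}}$.

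The third step, which I expect to be the main obstacle, is showing that Hodge classes are dRB classes. First, using the Künneth formula and the identification $\mathrm{H}^1(A)^\vee\cong \mathrm{H}^1(A)(1)$ up to twist (via the polarization), we realize $t$ as a Hodge class in some $\mathrm{H}^{2k}(A^N,\mathbb{Q}(k))$. All these identifications are compatible with the dRB structures, because they are induced by algebraic correspondences. We then invoke Deligne's theorem that Hodge classes on abelian varieties are absolute Hodge. Its de Rham component $t_{\mathrm{dR}}\in \mathrm{H}^{2k}_{\mathrm{dR}}(A^N_{\mathbb{C}}/\mathbb{C})$ is therefore sent by every $\sigma\in\mathrm{Aut}(\mathbb{C}/\qbar)$ to the de Rham component of an absolute Hodge class on $A^N$; note that $\sigma$ preserves $A^N$, which is defined over $\qbar$. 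The absolute Hodge classes form a countable $\mathbb{Q}$-vector space, so the orbit of $t_{\mathrm{dR}}$ is countable. A vector in $V\otimes_{\qbar}\mathbb{C}$ whose $\mathrm{Aut}(\mathbb{C}/\qbar)$-orbit is countable must lie in $V$: otherwise some coordinate is transcendental, and that coordinate has an uncountable orbit. Hence $t_{\mathrm{dR}}\in \mathrm{H}^{2k}_{\mathrm{dR}}(A^N/\qbar)$, with the factor $(2\pi\sqrt{-1})^{k}$ absorbed by the twist $\mathbb{Q}_{\mathrm{dRB}}(k)$. This is exactly the statement that $t$ is a dRB class.

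Finally, combining the three steps: $\gdrbmath(A)$ fixes every tensor whose stabilizer is $\mathrm{MT}(A)$, so $\gdrbmath(A)\subset\mathrm{MT}(A)$ inside $\mathrm{GL}(\betti)$. Most of the care goes into the twist bookkeeping in the first and third steps, so that the Hodge-theoretic $\mathbb{Q}(p)$ and the dRB $\mathbb{Q}_{\mathrm{dRB}}(p)$ match. The only deep input is Deligne's absolute Hodge theorem; the Analytic Subgroup Theorem is not needed for this inclusion.
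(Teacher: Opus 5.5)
The paper gives no proof of this statement; it is quoted from \cite{ksv2026rhambetticlassescoefficients}, so there is no internal argument to compare against. Your route is the standard way to get this inclusion: Deligne's theorem that Hodge classes on abelian varieties are absolute Hodge, the $\mathrm{Aut}(\mathbb{C}/\qbar)$-countability argument showing that absolute Hodge classes on a variety over $\qbar$ are dRB classes, and the description of $\mathrm{MT}(A)$ as a stabilizer of tensors. Step~3 is sound as written, including the reduction to $\mathrm{H}^{2k}(A^N,\mathbb{Q}(k))$ via K\"unneth and the polarization, and the transcendence argument. You are also right that the Analytic Subgroup Theorem is not needed here.

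The step that fails as written is Step~2. The group $\mathrm{GL}(\betti)$ has no action on $\mathbb{Q}(p)$, so ``the subgroup of $\mathrm{GL}(\betti)$ fixing all Hodge classes in all $T^{m,n}(p)$'' is undefined. Your Step~1 gives an action on the twists only to $\gdrbmath(A)$, not to the ambient group in which you take the stabilizer. If you let $\mathrm{GL}(\betti)$ act trivially on $\mathbb{Q}(p)$, the claim is false. The polarization $\phi\in T^{0,2}(-1)$ is a Hodge class whose stabilizer is then $\mathrm{Sp}(\phi)$. That group misses the homotheties $\lambda\neq\pm1$, which $\mathrm{MT}(A)$ contains by Lemma~\ref{hodgetimesgm}. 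The resulting stabilizer is $\mathrm{Hdg}(A)$, not $\mathrm{MT}(A)$.

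Deligne's statement is formulated in $\mathrm{GL}(\betti)\times\gmmath$, with the second factor acting on $\mathbb{Q}(1)$, and the repair is routine:
\begin{itemize}
\item Put $V'=\betti\oplus\mathbb{Q}(1)$ and $V'_{\mathrm{dRB}}=\drba\oplus\mathbb{Q}_{\mathrm{dRB}}(1)$. Both $\mathrm{MT}(V')$ and $\gdrbmath(V'_{\mathrm{dRB}})$ lie in $\mathrm{GL}(\betti)\times\gmmath$.
\item Being reductive, $\mathrm{MT}(V')$ is the stabilizer there of all Hodge classes in all $T^{m,n}(p)$.
\item By your Step~3 these Hodge classes are dRB classes, hence fixed by $\gdrbmath(V'_{\mathrm{dRB}})$. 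So $\gdrbmath(V'_{\mathrm{dRB}})\subset\mathrm{MT}(V')$.
\item The first projection maps these two groups onto $\gdrbmath(A)$ and $\mathrm{MT}(A)$, by \cite[Proposition 2.21]{deligne2012tannakian} applied to the subcategories generated by $\mathrm{H}^1$. This yields the inclusion, and Step~1 becomes unnecessary.
\end{itemize}
Equivalently, you can work in $\mathrm{GSp}(\betti,\phi)$, letting it act on $\mathbb{Q}(1)$ by the inverse multiplier. Both groups lie in $\mathrm{GSp}(\betti,\phi)$ and act on the twist through this same character, because $\phi$ is a morphism both of Hodge structures and of dRB structures.
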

The above observation can be refined as follows.
\begin{lemma}\label{centreincentre}
    For any abelian variety $A$ defined over $\qbar$, the centre of its de Rham-Betti group $\mathrm{Z}(\gdrbmath(A))$ is a subgroup of $\mathrm{Z}(\mathrm{MT}(A))$, i.e., the centre of its Mumford-Tate group. Also, $\mathrm{Z}(\gdrbmath(A))$ naturally lies in $\mathrm{Z}(\mathrm{End}_{\mathrm{Hdg}}(\betti))$, i.e., the centre of the endomorphism algebra of $A$.
\end{lemma}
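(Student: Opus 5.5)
The plan is to deduce the second assertion first and then obtain the first from it.

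\emph{Step 1: $\mathrm{Z}(\gdrbmath(A))$ lies in the centre of $\mathrm{End}^{\circ}(A)$.} Let $z \in \mathrm{Z}(\gdrbmath(A))(R)$ for a $\mathbb{Q}$-algebra $R$. Since $z$ commutes with $\gdrbmath(A)$, it is a $\gdrbmath(A)$-equivariant endomorphism of $\betti\otimes R$. By the Tannakian formalism, the $\gdrbmath(A)$-invariants in $\mathrm{End}(\betti)$ are exactly the dRB endomorphisms $\mathrm{End}_{\mathrm{dRB}}(\betti)$. Taking invariants commutes with flat base change, so $z \in \mathrm{End}_{\mathrm{dRB}}(\betti)\otimes R$. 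By Theorem \ref{mainfact}, this equals $\mathrm{End}_{\mathrm{Hdg}}(\betti)\otimes R \cong \mathrm{End}^{\circ}(A)\otimes R$.

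Next I would show that $z$ is central in $\mathrm{End}^{\circ}(A)\otimes R$. Each $f\in \mathrm{End}^{\circ}(A)$ acts on $\betti$ as a dRB endomorphism. By Tannakian duality, $\gdrbmath(A)$ commutes with $f$. In fact more is true: the centraliser of $\mathrm{End}_{\mathrm{dRB}}(\betti)$ contains $\gdrbmath(A)$. However, $z$ commuting with $\gdrbmath(A)$ alone does not immediately give that $z$ commutes with $f$. Instead, we use $z\in \mathrm{End}^{\circ}(A)\otimes R$ directly. Every element of $\gdrbmath(A)$ lies in the commutant $C$ of $\mathrm{End}^{\circ}(A)$ in $\mathrm{End}(\betti)$. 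So $z$ lies in $\mathrm{End}^{\circ}(A)\otimes R$ and, being in $\gdrbmath(A)$, also in $C\otimes R$. Hence $z$ commutes with all of $\mathrm{End}^{\circ}(A)$, and therefore lies in the centre of $\mathrm{End}^{\circ}(A)\otimes R$. This is the ``natural'' inclusion claimed in the statement.

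\emph{Step 2: $\mathrm{Z}(\gdrbmath(A))$ lies in $\mathrm{Z}(\mathrm{MT}(A))$.} By Theorem \ref{keyinclusion}, $\mathrm{Z}(\gdrbmath(A))\subset \mathrm{MT}(A)$. It therefore suffices to show that $z$ commutes with $\mathrm{MT}(A)$. By Step 1, $z$ lies in $\mathrm{End}_{\mathrm{Hdg}}(\betti)\otimes R$. The Hodge endomorphisms are exactly the $\mathrm{MT}(A)$-invariants in $\mathrm{End}(\betti)$, again by the Tannakian description of $\mathrm{MT}$. Hence $z$ commutes with $\mathrm{MT}(A)_R$. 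An element of $\mathrm{MT}(A)$ commuting with all of $\mathrm{MT}(A)$ is central, so $z \in \mathrm{Z}(\mathrm{MT}(A))$. Since this holds functorially in $R$, we get the inclusion of group schemes.

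\emph{Main obstacle.} The only delicate point is the scheme-theoretic bookkeeping in Step 1: we need points with values in arbitrary $R$, not just $\mathbb{Q}$-points. This is handled by flatness of $R$ over $\mathbb{Q}$ together with the fact that invariants of a linear group action on a finite-dimensional space are cut out by linear equations. Alternatively, we can reduce to $\overline{\mathbb{Q}}$-points, since the groups are smooth in characteristic zero.
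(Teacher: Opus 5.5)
Your proposal is correct and follows essentially the same argument as the paper. Both use $\mathrm{Z}(G)=\mathrm{End}_{G}(V)\cap G$, the inclusion $\gdrbmath(A)\subset\mathrm{MT}(A)$ from Theorem \ref{keyinclusion}, and $\mathrm{End}_{\mathrm{dRB}}=\mathrm{End}_{\mathrm{Hdg}}$ from Theorem \ref{mainfact}. You only reverse the order of the two assertions and make explicit the functor-of-points and flat-base-change bookkeeping that the paper leaves implicit.
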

\begin{proof}
For any algebraic group $G\subset\mathrm{GL}(V)$, we have by definition $\mathrm{Z}(G)=\mathrm{End}_{G}(V) \cap G$. Recall that we have the natural inclusion of algebraic groups $\gdrbmath(A) \xhookrightarrow{} \mathrm{MT}(A)$ by Theorem \ref{keyinclusion} and by Theorem \ref{mainfact} we have that $\mathrm{End}_{\mathrm{Hdg}}(\mathrm{H}^1(A,\mathbb{Q})) = \mathrm{End}_{\mathrm{dRB}}(\mathrm{H}^1(A,\mathbb{Q}))$. Thus we obtain the inclusion of $\mathrm{Z}(\gdrbmath(A))$ in $\mathrm{Z}(\mathrm{MT}(A))$ as a subgroup. For the second inclusion, note that $\mathrm{Z}(\gdrbmath(A))$ commutes with every element of $\gdrbmath(A)$ which implies that it falls inside $\mathrm{End}_{\mathrm{dRB}}(\betti)$. Furthermore as a subgroup of $\gdrbmath(A)$, $\mathrm{Z}(\gdrbmath(A))$ commutes with every element of $\mathrm{End}_{\mathrm{dRB}}(\betti)$. We thus conclude that $\mathrm{Z}(\gdrbmath(A))\subset\mathrm{Z}(\mathrm{End}_{\mathrm{dRB}}(\betti))\cong\mathrm{Z}(\mathrm{End}^{\circ}(A))$.
\end{proof}
To relate the centre of dRB groups and $\gmmath$, we have the following lemma, which is a slight strengthening of \cite[Theorem 5.12]{ksv2026rhambetticlassescoefficients}.
\begin{lemma}\label{surjectiontogmcompatible}
For an abelian variety $A$ defined over $\qbar$, there exist surjective algebraic group homomorphisms from $\mathrm{Z}(\mathrm{MT}(A))$ and $\mathrm{Z}(\gdrbmath(A))$ to $\gmmath$ and moreover, they are compatible with each other. In other words, the following diagram commutes.
$$\begin{tikzcd}[column sep=small, row sep=small]
\mathrm{Z}(\gdrbmath(A)) \arrow[r, two heads] \arrow[d, hook] & \gmmath \\
\mathrm{Z}(\mathrm{MT}(A)) \arrow[ru, two heads]              &        
\end{tikzcd}$$
\end{lemma}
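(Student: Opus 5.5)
We take both maps to be restrictions of one character, namely the multiplier of a polarization; compatibility is then automatic. Fix a polarization of $A$, i.e.\ an ample line bundle $L$ on $A$. It gives a non-degenerate alternating form
\[
\psi:\betti\otimes\betti\rightarrow \mathbb{Q}(-1),
\]
obtained by cup product followed by pairing against $c_1(L)^{g-1}$ and the trace. Let $\mathrm{GSp}(\betti,\psi)$ be the group of symplectic similitudes, with multiplier character $\nu:\mathrm{GSp}(\betti,\psi)\rightarrow\gmmath$. The two arrows in the diagram will both be $\nu$, restricted to $\mathrm{Z}(\mathrm{MT}(A))$ and to $\mathrm{Z}(\gdrbmath(A))$. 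Since the inclusion $\mathrm{Z}(\gdrbmath(A))\subset\mathrm{Z}(\mathrm{MT}(A))$ of Lemma \ref{centreincentre} takes place inside $\mathrm{GL}(\betti)$, the triangle commutes trivially. The work is to show that both groups lie in $\mathrm{GSp}$ and that both restrictions of $\nu$ are surjective.

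For the Mumford-Tate side, $\psi$ is a morphism of Hodge structures, so $\mathrm{MT}(A)\subset\mathrm{GSp}(\betti,\psi)$. By Lemma \ref{hodgetimesgm}, $\mathrm{MT}(A)$ contains the homotheties, which are central. A homothety $t$ has $\nu(t)=t^{2}$, so $\nu$ is already surjective on $\gmmath\subset\mathrm{Z}(\mathrm{MT}(A))$.

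For the de Rham-Betti side, the key input is that $\psi$ is also a morphism of dRB structures $\drba^{\otimes 2}\rightarrow\mathbb{Q}_{\mathrm{dRB}}(-1)$. Cup product is compatible with the comparison isomorphism $\rho_m$. The class $c_1(L)$ is algebraic, so its Betti realization maps under $\rho_m$ to $(2\pi\sqrt{-1})^{-1}$ times a $\qbar$-rational de Rham class. The trace map $\mathrm{H}^{2g}\rightarrow\mathbb{Q}(-g)$ is likewise compatible. Hence $\psi$ is $\gdrbmath(A)$-equivariant, which gives $\gdrbmath(A)\subset\mathrm{GSp}(\betti,\psi)$. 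Moreover, on the one-dimensional object $\mathbb{Q}_{\mathrm{dRB}}(-1)$, which is the image of $\psi$ and hence lies in $\langle\drba\rangle^{\otimes}$, the group acts through $\nu$. Tannakian formalism then makes $\nu:\gdrbmath(A)\rightarrow\gdrbmath(\mathbb{Q}_{\mathrm{dRB}}(-1))$ surjective, being induced by inclusion of a Tannakian subcategory. The target is $\gdrbmath(\mathbb{Q}_{\mathrm{dRB}}(-1))=\gmmath$: the dRB classes in $\mathbb{Q}_{\mathrm{dRB}}(-1)^{\otimes n}$ are zero for $n\neq0$ because $2\pi\sqrt{-1}$ is transcendental, so the group is not a finite subgroup of $\gmmath$. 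Finally, Theorem \ref{gdrbconnectedness} says $\gdrbmath(A)$ is connected and reductive, so $\gdrbmath(A)=\mathrm{Z}(\gdrbmath(A))^{0}\cdot\gdrbmath(A)^{\mathrm{der}}$. The character $\nu$ kills the derived group, so $\nu(\mathrm{Z}(\gdrbmath(A)))=\nu(\gdrbmath(A))=\gmmath$.

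The main obstacle is the dRB-compatibility of $\psi$, and in particular pinning down the exact Tate twist, so that $\mathbb{Q}_{\mathrm{dRB}}(-1)$ genuinely appears in $\langle\drba\rangle^{\otimes}$ with the group acting by $\nu$. I would first try to get this from the algebraicity of $c_1(L)$ and the standard compatibility of $\rho_m$ with cup product and trace. Alternatively, I would use the Weil pairing realized through the dual abelian variety together with Theorem \ref{mainfact} applied to $\phi_L:A\rightarrow A^{\vee}$.
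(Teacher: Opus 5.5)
Your argument is correct, but it runs through a different character than the paper's.

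\textbf{The paper's route.} The paper uses the determinant. The class of a point gives $\wedge^{2g}\drba=\mathrm{H}^{2g}_{\mathrm{dRB}}(A,\mathbb{Q})\cong\mathbb{Q}_{\mathrm{dRB}}(-g)$, and transcendence of $\pi$ gives $\gdrbmath(\wedge^{2g}\drba)=\gmmath$. On the Hodge side, $\mathrm{MT}(\wedge^{2g}\betti)=\gmmath$ since this is one-dimensional of weight $2g$. Deligne--Milne then yields surjections $\mathrm{MT}(A)\to\gmmath$ and $\gdrbmath(A)\to\gmmath$, both equal to $\det$. Lemma \ref{surjectiontotorus} pushes surjectivity down to the centres on both sides; it is a Lie-algebra version of your step $G=\mathrm{Z}(G)^{\circ}\cdot G^{\mathrm{der}}$.

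\textbf{Your route.} You use the multiplier $\nu$ of the polarization, with $\mathbb{Q}_{\mathrm{dRB}}(-1)$ realized as the image of $\psi$. On $\mathrm{GSp}$ one has $\det=\nu^{g}$, so the paper's maps are exactly the $g$-th powers of yours.

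\textbf{Comparison.}
\begin{itemize}
\item The determinant route needs less input: only the algebraicity of the point class. Since $\det$ is defined on all of $\mathrm{GL}(\betti)$, nothing has to be checked about containment in $\mathrm{GSp}$. One lemma also treats $\mathrm{MT}(A)$ and $\gdrbmath(A)$ uniformly.
\item Your route needs the dRB-compatibility of the polarization pairing, which the paper only invokes later via Bost--Charles for Corollary \ref{gmweights}, and hard Lefschetz for nondegeneracy (though $\psi\neq 0$ would suffice).
\item In exchange, the Mumford--Tate side becomes immediate from the central homotheties, with $\nu(t)=t^{2}$. You also land on the more primitive character $\nu$, which is precisely the character $\chi_{\phi}$ used in Corollary \ref{groupinterpret} to track weights.
\end{itemize}
You were right not to use the homothety shortcut on the dRB side: there it would be circular, since containment of homotheties in $\gdrbmath(A)$ is exactly what the main theorem is proving.
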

\begin{proof}
Let $g=\mathrm{dim}(A)$. Then $\wedge^{2g}\mathrm{H}^1(A,\mathbb{Q})$ is a weight $2g$ Hodge structure of dimension 1. Hence $\mathrm{MT}(\wedge^{2g}\mathrm{H}^1(A,\mathbb{Q}))=\gmmath$. On the other hand, elements in the one-dimensional dRB structure $\mathrm{H}_{\mathrm{dRB}}^{2g}(A,\mathbb{Q}) \otimes \mathbb{Q}_{\mathrm{dRB}}(g)$ are algebraic classes and hence are dRB classes. Therefore we obtain $\wedge^{2g}\mathrm{H}_{\mathrm{dRB}}^1(A,\mathbb{Q})=\mathrm{H}_{\mathrm{dRB}}^{2g}(A,\mathbb{Q}) \cong \mathbb{Q}_{\mathrm{dRB}}(-g)$. Due to the well-known fact that $\pi$ is a transcendental number, this implies that $\gdrbmath(\wedge^{2g}\mathrm{H}_{\mathrm{dRB}}^1(A,\mathbb{Q}))=\gmmath$. By \cite[Proposition 2.21]{deligne2012tannakian}, the inclusion of neutral Tannakian categories $\langle\wedge^{2g}\mathrm{H}^1(A,\mathbb{Q})\rangle^{\otimes} \xhookrightarrow{} \langle \mathrm{H}^1(A,\mathbb{Q})\rangle^{\otimes}$ and $\langle\wedge^{2g}\mathrm{H}^1_{\mathrm{dRB}}(A,\mathbb{Q})\rangle^{\otimes} \xhookrightarrow{} \langle \mathrm{H}_{\mathrm{dRB}}^1(A,\mathbb{Q})\rangle^{\otimes}$ induces surjective algebraic group homomorphisms $\mathrm{MT}(A) \twoheadrightarrow \gmmath$ and $\gdrbmath(A) \twoheadrightarrow \gmmath$. Upon close inspection, these two homomorphisms are the restrictions of the usual determinant maps on $\mathrm{GL}(\betti)$, hence are compatible with each other. Furthermore, both $\mathrm{MT}(A)$ and $\gdrbmath(A)$ are reductive groups. Then invoking Lemma \ref{surjectiontotorus} stated below, the surjective homomorphisms $\mathrm{Z}(\mathrm{MT}(A)) \twoheadrightarrow \gmmath$ and $\mathrm{Z}(\gdrbmath(A)) \twoheadrightarrow \gmmath$ hold because $\gdrbmath(A)$ and $\mathrm{MT}(A)$ are reductive. Finally we use Lemma \ref{centreincentre} to show the commutativity of the diagram.
\end{proof}

\begin{lemma}\label{surjectiontotorus}
    Suppose a reductive group $G$ admits a surjective homomorphism $f: G\rightarrow H$, where $H$ is a connected commutative algebraic group. Assume both $G$ and $H$ are defined over a field of characteristic zero. Then the restriction of $f$ to $\mathrm{Z}(G)$ is also a surjective homomorphism of algebraic groups.
\end{lemma}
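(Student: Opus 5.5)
We plan to use the standard structure theory of reductive groups in characteristic zero. Let $G$ be reductive over a field $K$ of characteristic zero. Let $G^{\circ}$ be its identity component and $\mathcal{D}G = [G^{\circ},G^{\circ}]$ its derived subgroup. The central connected torus $Z := \mathrm{Z}(G^{\circ})^{\circ}$ and $\mathcal{D}G$ satisfy: the multiplication map
$$Z \times \mathcal{D}G \longrightarrow G^{\circ}$$
is a surjective isogeny. We first reduce to the case where $G$ is connected. In the intended application (Lemma \ref{surjectiontogmcompatible}) the groups $\gdrbmath(A)$ and $\mathrm{MT}(A)$ are connected by Theorem \ref{gdrbconnectedness} and by the classical theory. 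So we state and prove the lemma for connected reductive $G$; we interpret ``reductive'' as connected reductive, or else restrict to $G^{\circ}$, whose image in the connected group $H$ is still all of $H$ because $f(G^{\circ})$ has finite index in $H$.

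The key step is to show that $f$ kills $\mathcal{D}G$. Since $H$ is commutative, $f$ factors through the abelianization, so $f([G,G]) \subset [H,H] = \{1\}$. Hence $\mathcal{D}G \subset \ker f$. Surjectivity of $Z \times \mathcal{D}G \to G$ on $\overline{K}$-points then gives
$$H(\overline{K}) = f(G(\overline{K})) = f(Z(\overline{K}))\, f(\mathcal{D}G(\overline{K})) = f(Z(\overline{K})).$$
Thus $f|_{Z}$ is surjective on $\overline{K}$-points. For algebraic groups over a field of characteristic zero, a homomorphism is surjective (faithfully flat) if and only if it is surjective on $\overline{K}$-points, since its image is a closed subgroup and everything is smooth by Cartier. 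Because $Z \subset \mathrm{Z}(G)$, the restriction $f|_{\mathrm{Z}(G)}$ is surjective as well.

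The only substantive input is the decomposition $G = Z\cdot\mathcal{D}G$ for connected reductive groups, which we quote from the standard theory, for instance via the Lie algebra splitting $\mathfrak{g} = \mathfrak{z}(\mathfrak{g}) \oplus [\mathfrak{g},\mathfrak{g}]$ in characteristic zero. The main point of care is the connectedness issue above. For a disconnected reductive $G$, the centre of $G$ may be smaller than $\mathrm{Z}(G^{\circ})$, so the statement could fail. We will therefore make explicit that connectedness is used, which is harmless here by Theorem \ref{gdrbconnectedness}.
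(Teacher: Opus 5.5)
Your proof is correct for connected $G$, which is all the application needs. It is the group-level version of the paper's argument rather than a copy of it.

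\textbf{Comparison of routes.} The paper argues on Lie algebras:
\begin{itemize}
\item it splits $\mathfrak g=\mathrm{Lie}(G)$ as $\mathfrak z(\mathfrak g)\oplus[\mathfrak g,\mathfrak g]$ and identifies $\mathfrak z(\mathfrak g)$ with $\mathrm{Lie}(\mathrm Z(G))$;
\item it notes that $\mathrm{Lie}(f)$ is surjective and kills $[\mathfrak g,\mathfrak g]$, since $\mathrm{Lie}(H)$ is abelian;
\item it returns to groups via Milne's criterion that a homomorphism surjective on Lie algebras has image containing $H^{\circ}$.
\end{itemize}
You instead quote the global decomposition $G=\mathrm Z(G)^{\circ}\cdot\mathcal{D}G$. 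After that you only need that $f$ kills $\mathcal{D}G$, and that in characteristic zero surjectivity can be tested on $\overline K$-points.

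The paper's route needs only the elementary splitting of a reductive Lie algebra, but pays for it with the Lie-to-group passage. Yours is more direct once the structure theorem is granted, and it makes explicit where connectedness enters. The paper also uses connectedness, silently, in the identification $\mathfrak z(\mathfrak g)=\mathrm{Lie}(\mathrm Z(G))$. That identification fails for $G=\mathrm{O}(2)$, where $\mathfrak z(\mathfrak g)=\mathfrak g$ but $\mathrm Z(G)=\mu_2$.

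\textbf{A correction to your last paragraph.} The statement does not actually fail for disconnected $G$ with $G^{\circ}$ reductive.
\begin{itemize}
\item Put $C=\mathrm Z(G^{\circ})^{\circ}$ and $n=[G:G^{\circ}]$, and let $g_i$ run over coset representatives of $G^{\circ}$.
\item For $c\in C(\overline K)$ the norm $N(c)=\prod_i g_i c g_i^{-1}$ lies in $C$ and is central in $G$.
\item Since $H$ is commutative, $f(N(c))=f(c)^{n}$.
\item Your argument applied to $G^{\circ}$ gives $f(C)=H$. So $H$ is a torus and $n$-th powers are surjective on it.
\end{itemize}
Hence $f(\mathrm Z(G))=H$ anyway. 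This is not needed here, since $\mathrm{G}_{\mathrm{dRB}}(A)$ and $\mathrm{MT}(A)$ are connected.
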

\begin{proof}
Since $f$ is a surjective homomorphism, it follows that $\mathrm{Lie}(f): L:=\mathrm{Lie}(G) \twoheadrightarrow \mathrm{Lie}(H)$. Note that $L$ is a reductive Lie algebra. It then follows that $L=\mathrm{Z}(L)\oplus L^{\mathrm{ss}}=\mathrm{Lie}(\mathrm{Z}(G))\oplus [L,L]$. Moreover because $H$ is a commutative algebraic group and $[L,L]$ is a semisimple Lie algebra, the image of $\mathrm{Lie}(f)|_{[L,L]}$ inside $\mathrm{Lie}(H)$ is $\{0\}$. We then obtain that $\mathrm{Lie}(f)|_{\mathrm{Lie}(\mathrm{Z}(G))}=\mathrm{Lie}(f|_{\mathrm{Z}(G)})$ is a surjective morphism of Lie algebras. By Proposition 3.25 of \cite{milne2011algebraic}, we have that $H^{\circ}\subset f(\mathrm{Z}(G))$. Since $H$ is assumed to be a connected algebraic group, we obtain that $f|_{\mathrm{Z}(G)}:\mathrm{Z}(G) \twoheadrightarrow H$.
\end{proof}

\section{Properties of Subtori of \texorpdfstring{$\mathrm{U}_{E}$}{U\_E}}

The identity component of the centre of the de Rham-Betti group of an abelian variety over $\qbar$ is an algebraic torus. We now summarize some basic properties of algebraic tori as well as establish notations. The reader can find more details in \cite[Chapter 14]{milne2014algebraic}. Recall that an algebraic group $T$ defined over $\mathbb{Q}$ is a (connected) \textit{$\mathbb{Q}$-algebraic torus} if $T_{\qbar}$ is isomorphic to a finite product of $\mathbb{G}_{m,\qbar}$. The \textit{character group} of a $\mathbb{Q}$-algebraic torus is $\mathrm{Hom}(T_{\qbar},\mathbb{G}_{m,\qbar})$. We denote it by $X^{*}(T)$. When no confusion arises, we sometimes abbreviate a \textit{$\mathbb{Q}$-algebraic torus} as an \textit{algebraic torus}. The finite-rank $\mathbb{Z}$-module $X^{*}(T)$ admits an action of the absolute Galois group $\mathrm{Gal}(\qbar/\mathbb{Q})$ as follows: an element $g \in \mathrm{Gal}(\qbar/\mathbb{Q})$ induces homomorphisms of algebraic groups $g_{T}: T_{\qbar} \rightarrow T_{\qbar}$ and $g_{0}: \mathbb{G}_{m,\qbar} \rightarrow \mathbb{G}_{m,\qbar}$.  Then for a character $f \in X^{*}(T)$, we let \begin{equation}\label{galoisactiononchar}g\circ f=g_{0} \circ f \circ g_{T}^{-1}\in X^{*}(T).\end{equation} This is a continuous group action on the left where we equip $\mathrm{Gal}(\qbar/\mathbb{Q})$ with the Krull topology and $X^{*}(T)$ with the discrete topology. 

\begin{theorem}[\cite{milne2014algebraic}, Theorem 14.17] \label{chartorus}
 Taking the character group defines a contravariant functor $X^{*}$ from the category of $\mathbb{Q}$-algebraic tori to the category of finite rank free $\mathbb{Z}$-modules equipped with a continuous $\mathrm{Gal}(\overline{\mathbb{Q}}/\mathbb{Q})$-action. This is an equivalence of categories. Moreover, under this equivalence, exact sequences of algebraic tori correspond to exact sequences of $\mathrm{Gal}(\overline{\mathbb{Q}}/\mathbb{Q})$-modules.
     
\end{theorem}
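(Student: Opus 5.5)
The plan is to reduce everything to the split case over $\qbar$ and then apply Galois descent along $\qbar/\mathbb{Q}$.

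\textbf{Well-definedness.} First I would check that $X^{*}$ is well defined. A $\mathbb{Q}$-torus $T$ is of finite type, so it splits over some finite Galois extension $L/\mathbb{Q}$, i.e.\ $T_{L}\cong \mathbb{G}_{m,L}^{n}$. Every character of $T_{\qbar}$ is then already defined over $L$. Hence the action (\ref{galoisactiononchar}) factors through the finite quotient $\mathrm{Gal}(L/\mathbb{Q})$, which gives continuity. The module $X^{*}(T)\cong\mathbb{Z}^{n}$ is free of finite rank, and a morphism of tori induces a Galois-equivariant map of character groups by pullback.

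\textbf{Split case over $\qbar$.} Diagonalizable groups over $\qbar$ are anti-equivalent to finitely generated abelian groups via $D\mapsto X^{*}(D)$, with inverse $M\mapsto \mathrm{Spec}\,\qbar[M]$ (the group algebra with its standard Hopf structure). For tori this concretely says $\mathrm{Hom}(\mathbb{G}_m^{n},\mathbb{G}_m^{m})\cong M_{m\times n}(\mathbb{Z})$. This equivalence is exact, because the characters of $\mathbb{G}_m^n$ are exactly the monomials.

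\textbf{Full faithfulness.} Given $\mathbb{Q}$-tori $T,T'$, a $\qbar$-homomorphism $\phi:T_{\qbar}\to T'_{\qbar}$ descends to $\mathbb{Q}$ if and only if $g\circ\phi\circ g^{-1}=\phi$ for all $g\in\absgalois$. This is Galois descent for morphisms of affine schemes: the coordinate ring of $T$ is $\mathcal{O}(T_{\qbar})^{\absgalois}$. Translating through the formula (\ref{galoisactiononchar}), this invariance condition becomes exactly Galois-equivariance of $X^{*}(\phi)$. Together with the split case, this yields $\mathrm{Hom}(T,T')\cong\mathrm{Hom}_{\absgalois}(X^{*}(T'),X^{*}(T))$.

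\textbf{Essential surjectivity (main obstacle).} Given a free $\mathbb{Z}$-module $M$ of finite rank with a continuous action factoring through $\mathrm{Gal}(L/\mathbb{Q})$, I would put on $\qbar[M]$ the semilinear action $g(\sum a_m m)=\sum g(a_m)\,g(m)$. This action is compatible with the Hopf structure. Descent for vector spaces (Hilbert 90 / Speiser's lemma) gives that $A:=\qbar[M]^{\absgalois}$ is a $\mathbb{Q}$-Hopf algebra with $A\otimes\qbar\cong\qbar[M]$. Then $T:=\mathrm{Spec}\,A$ is a torus, and I would verify that $X^{*}(T)\cong M$ equivariantly. The care needed is in checking that the action on characters induced by this descent datum agrees with the convention $g_0\circ f\circ g_T^{-1}$; with the semilinear action defined above it should come out on the nose.

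\textbf{Exactness.} Let $1\to T'\to T\to T''\to 1$ be exact. Exactness may be checked after base change to $\qbar$, where it is the exactness of the anti-equivalence in the split case. This gives an exact sequence $0\to X^{*}(T'')\to X^{*}(T)\to X^{*}(T')\to 0$, and the maps are equivariant by functoriality. Conversely, an exact sequence of Galois lattices produces, via the equivalence, morphisms of tori that are exact over $\qbar$, and therefore exact over $\mathbb{Q}$.
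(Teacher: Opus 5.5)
Your proposal is correct. The paper gives no proof of its own and simply cites Milne; your argument is essentially the standard proof in that reference: the anti-equivalence between split tori over $\overline{\mathbb{Q}}$ and lattices, Galois descent via $\mathcal{O}(T)=\mathcal{O}(T_{\overline{\mathbb{Q}}})^{\mathrm{Gal}(\overline{\mathbb{Q}}/\mathbb{Q})}$ for full faithfulness, and the twisted group algebra for essential surjectivity (your semilinear action does recover the paper's convention $g_0\circ f\circ g_T^{-1}$ exactly).

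The one step that deserves an extra line is exactness in the split case. It rests on $\mathcal{O}(D)=\overline{\mathbb{Q}}[X^{*}(D)]$, so that closed immersions and quotient maps correspond to surjections and injections of group algebras, and hence of character groups.
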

An essential example of a $\mathbb{Q}$-algebraic torus is $\mathrm{Res}_{E/\mathbb{Q}}\gmmath$, where $E$ is a number field. We give a description of the $\mathrm{Gal}(\qbar/\mathbb{Q})$ structure of $X^{*}(\resgmmath)$. More details can be found in \cite[Chapter 14]{milne2014algebraic}.

\begin{example} \label{firstexample}
Recall that $\mathrm{Res}_{E/\mathbb{Q}}\gmmath$ is a $\mathbb{Q}$-algebraic torus such that for a $\mathbb{Q}$-algebra $R$, we have that $\mathrm{Res}_{E/\mathbb{Q}}\gmmath(R)=(E\otimes_{\mathbb{Q}}R)^{\times}$. In particular, we have that $\mathrm{Res}_{E/\mathbb{Q}}\gmmath(\qbar)=\prod_{\mathrm{Hom}(E,\overline{\mathbb{Q}})}\qbar^{\times}$.
Then we have an identification $$X^{*}(\resgmmath)\cong \mathbb{Z}^{\mathrm{Hom}(E,\overline{\mathbb{Q}})}$$ as $\absgalois$-modules. The free $\mathbb{Z}$-module $\mathbb{Z}^{\mathrm{Hom}(E,\overline{\mathbb{Q}})}$ admits a basis labeled by $\mathrm{Hom}(E,\overline{\mathbb{Q}})$. Under this identification, an element $(n_{\sigma})_{\sigma\in \mathrm{Hom}(E,\qbar)}\in \mathbb{Z}^{\mathrm{Hom}(E,\overline{\mathbb{Q}})}$ corresponds to a map $$f:\resgmmath(\qbar)\rightarrow\gmmath(\qbar),(z_{\sigma})_{\sigma\in \mathrm{Hom}(E,\qbar)}\mapsto\prod_{\sigma\in \mathrm{Hom}(E,\qbar)}z_{\sigma}^{n_{\sigma}}\in\qbar^{\times}.$$ By formula (\ref{galoisactiononchar}), the left group action by $\absgalois$ on $\mathbb{Z}^{\mathrm{Hom}(E,\overline{\mathbb{Q}})}$ corresponds to the $\absgalois$-structure on $X^{*}(\resgmmath)$. Let $L$ be the Galois closure of $E/\mathbb{Q}$ in $\overline{\mathbb{Q}}$.
Note that $(\mathrm{Res}_{E/\mathbb{Q}}\mathbb{G}_{m})_{L}$ already splits as a product of $\mathbb{G}_{\mathrm{m},L}$. Hence the action of $\absgalois$ on  $X^{*}(\resgmmath)$ in fact descends to the action of $\galoisL$ on $\mathbb{Z}^{\mathrm{Hom}(E,L)}$. 
\end{example}
Recall that a number field $E$ is a \textit{CM-field} if there exists a unique nontrivial field automorphism $\tau \in \mathrm{Aut}(E/\mathbb{Q})$ such that for any embedding $i: E \rightarrow \mathbb{C}$ and any $e\in E$, we have $\overline{i(e)}=i(\tau e)$. We will call $\tau$ the complex conjugation on $E$.

\begin{lemma}[\cite{milne2006complex}, Proposition 1.1.4 and Corollary 1.1.5] \label{comjugationcommuting}
For a CM-field $E$, its complex conjugation commutes with any other field automorphism of $E$. A finite composites of CM-subfields of a field is CM, and the Galois closure $L$ of a CM-field $E/\mathbb{Q}$ in $\qbar$ is also a CM-field. 
\end{lemma}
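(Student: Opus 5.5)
The plan is to work with the intrinsic characterization of CM-fields used in the paper: there is a nontrivial $\tau\in\mathrm{Aut}(E/\mathbb{Q})$ with $\overline{i(e)}=i(\tau e)$ for every embedding $i:E\to\mathbb{C}$. Alongside it I will use the classical equivalent form: $E=F(\sqrt{\alpha})$ with $F$ totally real and $\alpha\in F$ totally negative, so that $E$ is a totally imaginary quadratic extension of a totally real field. The three assertions are handled in order: commutation, then composites, then Galois closures.

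\emph{Commutation.} Let $\sigma\in\mathrm{Aut}(E/\mathbb{Q})$ and fix an embedding $i:E\to\mathbb{C}$. Then $i\circ\sigma$ is again an embedding, so the defining property of $\tau$ applied to it gives $i(\sigma(\tau e))=\overline{i(\sigma e)}$. Applying the defining property to $i$ itself at the element $\sigma e$ gives $\overline{i(\sigma e)}=i(\tau(\sigma e))$. Hence $i\circ\sigma\circ\tau=i\circ\tau\circ\sigma$, and injectivity of $i$ yields $\sigma\tau=\tau\sigma$. The same one-embedding argument shows that $\tau$ is determined by any single $i$, which gives its uniqueness.

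\emph{Composites.} It suffices to treat two CM-subfields $E_1,E_2$ of a field $K$; the finite case then follows by induction. Write $E_j=F_j(\sqrt{\alpha_j})$ with $F_j$ totally real and $\alpha_j$ totally negative.
\begin{enumerate}
\item The field $F_1F_2$ is totally real, because any complex embedding of it is generated by the images of $F_1$ and $F_2$, and those images lie in $\mathbb{R}$.
\item The element $\alpha_1\alpha_2$ is totally positive in $F_1F_2$. Hence $F':=F_1F_2(\sqrt{\alpha_1\alpha_2})$ is totally real, since every embedding sends $\sqrt{\alpha_1\alpha_2}$ to a real number.
\item We have $E_1E_2=F'(\sqrt{\alpha_1})$, because $\sqrt{\alpha_2}=\sqrt{\alpha_1\alpha_2}/\sqrt{\alpha_1}$.
\item The element $\alpha_1$ is totally negative in $F'$, so every embedding of $E_1E_2$ is non-real. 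Therefore $[E_1E_2:F']=2$, and $E_1E_2$ is a totally imaginary quadratic extension of a totally real field, i.e.\ CM.
\end{enumerate}
The nontrivial automorphism of $E_1E_2/F'$ is the required $\tau$: since it sends $\sqrt{\alpha_1}\mapsto-\sqrt{\alpha_1}$ while fixing the totally real $F'$, it realizes complex conjugation under every embedding.

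\emph{Galois closure.} Inside $\qbar$, the Galois closure is $L=\prod_{\sigma\in\mathrm{Hom}(E,\qbar)}\sigma(E)$, a finite composite. Each $\sigma(E)$ is isomorphic to $E$, and the CM property is intrinsic to the abstract field: transport $\tau$ to $\sigma\tau\sigma^{-1}$, and note that embeddings of $\sigma(E)$ are exactly $i\circ\sigma^{-1}$. So each $\sigma(E)$ is CM, and $L$ is CM by the composite step.

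The main obstacle is the composite step, specifically ensuring that the composite is still a \emph{quadratic} extension of a totally real field rather than a larger extension. The trick $F'=F_1F_2(\sqrt{\alpha_1\alpha_2})$ handles this. The one point requiring care is the equivalence between the paper's definition via $\tau$ and the $F(\sqrt{\alpha})$ description, which I will recall with $F=E^{\tau}$.
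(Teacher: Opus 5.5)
Your argument is correct. The paper gives no proof of its own and simply cites Milne, where these facts are derived from the automorphism characterization of CM-fields (the characterization the paper takes as its definition). Your commutation step, and your reduction of the Galois closure to the composite of the conjugates $\sigma(E)$, are exactly that standard argument.

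You genuinely diverge in the composite step. There you pass to the model ``totally imaginary quadratic extension of a totally real field'' and build the index-two totally real subfield $F'=F_1F_2(\sqrt{\alpha_1\alpha_2})$ by hand. Here $\sqrt{\alpha_1\alpha_2}$ must be read as $\sqrt{\alpha_1}\sqrt{\alpha_2}\in K$, which is what your step 3 uses.

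The cited route never leaves the $\tau$-definition. Take an embedding $\rho$ of $E=E_1\cdots E_n$ into $\mathbb{C}$ and let $c$ be complex conjugation. Each $\rho(E_j)$ is stable under $c$, since $c(\rho(e))=\rho(\tau_j e)$, so the field $\rho(E)$ they generate is stable too. Hence $\rho^{-1}\circ c\circ\rho$ is an automorphism of $E$ that restricts to $\tau_j$ on each $E_j$. It is therefore independent of $\rho$ and nontrivial, i.e.\ it is the required $\tau$.

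That route is shorter, needs neither the induction nor the translation between the two definitions, and handles all factors at once. Your version pays for that translation. It is routine, as you say: $F=E^{\tau}$ is totally real of index $2$ and $E$ is totally imaginary, so any $\alpha$ with $E=F(\sqrt{\alpha})$ is totally negative. In return, your version explicitly exhibits the maximal totally real subfield of the composite and a generator of the composite over it.
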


\begin{example}\label{uexample}
Let $E$ be a CM-field. Then $\mathrm{U}_E$ is an algebraic subtorus of $\resgmmath$ such that for a $\mathbb{Q}$-algebra $R$, we have that $\mathrm{U}_E(R)=\{x \in (E\otimes_{\mathbb{Q}}R)^{\times}|x(\tau\circ x)=1\}$, where the complex conjugation $\tau$ on $E$ extends $\mathbb{Q}$-linearly to $E\otimes_{\mathbb{Q}}R$. Then it follows that $$\mathrm{U}_E(\qbar)=\{(z_{\sigma})_{\sigma \in \mathrm{Hom}(E,\qbar)}\in\prod_{\mathrm{Hom}(E,\overline{\mathbb{Q}})}\qbar^{\times}|z_{\sigma}z_{\overline{\sigma}}=1\} \subset \resgmmath(\qbar).$$ 
Using the explicit description from Example \ref{firstexample}, one can see that for any element $f \in X^{*}(\resgmmath)$ and for any element $z\in \mathrm{U}_{E}(\qbar)$, we have that $f(z)=(\tau\circ f)(z)^{-1}$.  Using Lemma \ref{comjugationcommuting}, one can check that the complex conjugation in $\absgalois$ acts as scalar multiplication by $-1$ on $X^{*}(\mathrm{U}_E)$.   \end{example}
 
\begin{lemma}\label{nosurjectiontogm}
Let $E_1,\dots, E_{n}$ be CM-fields. Then no $\mathbb{Q}$-algebraic subgroup of $\mathrm{U}_{E_1}\times\cdots\times\mathrm{U}_{E_n}$ admits a surjective homomorphism to $\gmmath$.
\end{lemma}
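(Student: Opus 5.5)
Let $T=\mathrm{U}_{E_1}\times\cdots\times\mathrm{U}_{E_n}$ and let $H\subset T$ be a $\mathbb{Q}$-algebraic subgroup. We argue by contradiction: suppose $f:H\twoheadrightarrow\gmmath$ is a surjective homomorphism defined over $\mathbb{Q}$.

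First we reduce to the case where $H$ is a torus. Since $\gmmath$ is connected, $f(H^{\circ})$ is a closed subgroup of finite index in $\gmmath$, hence equals $\gmmath$. So we may replace $H$ by $H^{\circ}$. This is a connected commutative group of multiplicative type (a closed subgroup of a torus), hence a $\mathbb{Q}$-torus.

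Next we pass to character groups via Theorem \ref{chartorus}. The inclusion $H\hookrightarrow T$ gives a $\absgalois$-equivariant surjection $X^{*}(T)\twoheadrightarrow X^{*}(H)$. Indeed, characters of a closed subgroup of a torus extend, since over $\qbar$ the character group of a diagonalizable subgroup is a quotient of that of the ambient torus. Moreover
$$X^{*}(T)=\bigoplus_i X^{*}(\mathrm{U}_{E_i}).$$
By Example \ref{uexample}, a fixed complex conjugation $c\in\absgalois$ acts as $-1$ on each $X^{*}(\mathrm{U}_{E_i})$. Hence $c$ acts as $-1$ on $X^{*}(T)$, and therefore also on its quotient $X^{*}(H)$.

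Now consider the map $X^{*}(f):X^{*}(\gmmath)=\mathbb{Z}\to X^{*}(H)$. It is Galois-equivariant, and Galois acts trivially on $X^{*}(\gmmath)$ because $\gmmath$ is split over $\mathbb{Q}$. It is injective because $f$ is surjective, by the exactness statement in Theorem \ref{chartorus}. Let $\chi=X^{*}(f)(1)\neq0$. Then
$$\chi=c\circ\chi=-\chi,$$
so $2\chi=0$. Since $X^{*}(H)$ is torsion-free, this forces $\chi=0$, a contradiction.

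The main point needing care is that $c$ acts as $-1$ on every $X^{*}(\mathrm{U}_{E_i})$ simultaneously for a single $c\in\absgalois$. This holds because, under the fixed embedding $\qbar\subset\mathbb{C}$, complex conjugation restricted to each Galois closure $L_i$ induces the CM conjugation $\tau_i$, which is central by Lemma \ref{comjugationcommuting}. That is exactly the input used in Example \ref{uexample}, which we take as given.
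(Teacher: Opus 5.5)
Your proof is correct and takes essentially the same approach as the paper: you reduce to a subtorus, use the Galois-equivariant surjection $\bigoplus_i X^{*}(\mathrm{U}_{E_i})\twoheadrightarrow X^{*}(H)$ on which complex conjugation acts as $-1$, and then get a contradiction with the trivial Galois action on $X^{*}(\mathbb{G}_{\mathrm{m}})$ using torsion-freeness. The only cosmetic difference is that you work with complex conjugation in $\mathrm{Gal}(\overline{\mathbb{Q}}/\mathbb{Q})$ directly, whereas the paper passes to the Galois closure $L$ of the compositum of the $E_i$ and uses its CM involution; you also spell out the reduction to $H^{\circ}$ more explicitly.
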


\begin{proof}
Let $E$ be the composite field of $E_1,\dots,E_{n}$ in $\qbar$, which is a CM field by Lemma \ref{comjugationcommuting}. Denote by $L$ the Galois closure of $E/\mathbb{Q}$ in $\qbar$, which is a Galois CM-field. Since $\gmmath$ is a connected algebraic group, it suffices to show that no connected algebraic subgroup of $\mathrm{U}_{E_1}\times\cdots\times\mathrm{U}_{E_n}$ admits a surjection to $\gmmath$. Let $T$ be an arbitrary $\mathbb{Q}$-algebraic subtorus of $\mathrm{U}_{E_1}\times\cdots\times\mathrm{U}_{E_n}$. Then by Theorem \ref{chartorus}, there is a $\mathrm{Gal}(L/\mathbb{Q})$-equivariant surjection of $\mathbb{Z}$-modules: \begin{equation}\label{chartandcharue}X^{*}(\mathrm{U}_{E_1})\oplus\cdots\oplus X^{*}(\mathrm{U}_{E_{n}}) \xrightarrow{f} X^{*}(T).\end{equation} Denote by $\tau$ the complex conjugation of the CM-field $L$. For any $m \in X^{*}(\mathrm{U}_{E_1})\oplus\cdots\oplus X^{*}(\mathrm{U}_{E_{n}})$, by Example \ref{uexample}, we have that $\tau\circ m=-m$. Then formula (\ref{chartandcharue}) implies that $$\tau \circ f(m)=f(\tau \circ m)=f(-m)=-f(m).$$ Because $f$ is surjective, $\tau$ acts as scalar multiplication by $-1$ on $X^{*}(T)$ as well. Suppose $T$ admits a surjective homomorphism to $\gmmath$. Then by Theorem \ref{chartorus} we would have a $\mathrm{Gal}(L/\mathbb{Q})$-equivariant inclusion $X^{*}(\gmmath) \xhookrightarrow{i} X^{*}(T)$. Note that \gm is already split over $\mathbb{Q}$, hence $\tau \in \mathrm{Gal}(L/\mathbb{Q})$ acts as identity on $X^{*}(\gmmath)$. Therefore, for any $a \in X^{*}(\gmmath)$, we would have that $i(a)=i(\tau \circ a)=\tau \circ i(a)=-i(a)$. Since $X^{*}(T)$ is a free $\mathbb{Z}$-module and $i$ is injective, this gives a contradiction. 
\end{proof}

\begin{corollary} \label{gminside}
Suppose $G$ is an algebraic group over $\mathbb{Q}$ such that no algebraic subgroup of $G$ admits a surjective homomorphism to $\gmmath.$ Given a $\mathbb{Q}$-algebraic subgroup $T \subset \gmmath \times G$ such that $\mathrm{pr}_1: T \rightarrow \gmmath$ is a surjective homomorphism, we have that $T \cap \gmmath \times \{\mathrm{id}_{G}\}=\gmmath \times \{\mathrm{id}_{G}\}$.
\end{corollary}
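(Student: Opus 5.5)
We argue through the kernel of the projection. Consider $\mathrm{pr}_2|_T: T \rightarrow G$ and set $K:=\ker(\mathrm{pr}_2|_T)$. Since $\ker(\mathrm{pr}_2)$ on $\gmmath\times G$ is exactly $\gmmath\times\{\mathrm{id}_G\}$, we have $K = T\cap(\gmmath\times\{\mathrm{id}_G\})$. This is a closed subgroup of $\gmmath\times\{\mathrm{id}_G\}\cong\gmmath$. The closed subgroups of $\gmmath$ are $\gmmath$ itself and the finite groups $\mu_n$, so it suffices to rule out $K=\mu_n$.

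Suppose, for contradiction, that $K=\mu_n$ is finite. Let $T' := \mathrm{pr}_2(T)$, the image of $T$ in $G$; it is a closed algebraic subgroup of $G$. The map $\mathrm{pr}_2|_T$ induces an isomorphism $T/K \cong T'$. Since $K\subset \gmmath\times\{\mathrm{id}_G\}$ maps into $\mu_n\subset\gmmath$ under $\mathrm{pr}_1$, the composite $T \xrightarrow{\mathrm{pr}_1}\gmmath\xrightarrow{x\mapsto x^n}\gmmath$ kills $K$. It therefore factors through $T/K\cong T'$, giving a homomorphism $\phi: T'\rightarrow\gmmath$ with $\phi(\mathrm{pr}_2(t)) = \mathrm{pr}_1(t)^n$ for all $t\in T$.

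Next we check that $\phi$ is surjective. The map $\mathrm{pr}_1: T\rightarrow\gmmath$ is surjective by hypothesis, and the $n$-th power map on $\gmmath$ is surjective as a morphism of algebraic groups (it is faithfully flat, being an isogeny). Hence $\phi$ is surjective. This exhibits an algebraic subgroup $T'$ of $G$ admitting a surjective homomorphism to $\gmmath$, contradicting the hypothesis on $G$. Therefore $K=\gmmath\times\{\mathrm{id}_G\}$, which is the claim.

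The only point needing care is the factorization through $T/K\cong T'$, i.e.\ that the quotient of $T$ by the kernel of $\mathrm{pr}_2|_T$ is isomorphic to the image. This holds in characteristic zero by the standard isomorphism theorem for algebraic groups (e.g.\ \cite{milne2011algebraic}); the universal property of the quotient then gives $\phi$. Everything else is elementary.
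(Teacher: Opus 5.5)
Your proof is correct and is essentially the paper's argument. The paper applies Goursat's lemma to $T\subset\mathbb{G}_{\mathrm{m}}\times T'$ to obtain a surjection $T'\twoheadrightarrow\mathbb{G}_{\mathrm{m}}/N$ with $N=T\cap(\mathbb{G}_{\mathrm{m}}\times\{\mathrm{id}\})$, then uses that $\mathbb{G}_{\mathrm{m}}/N$ is either $\mathbb{G}_{\mathrm{m}}$ or trivial; you unpack this special case of Goursat directly via $T/K\cong T'$ and the identification $\mathbb{G}_{\mathrm{m}}/\mu_n\cong\mathbb{G}_{\mathrm{m}}$ given by the $n$-th power map.
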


\begin{proof}
Denote the image of $T$ under $\mathrm{pr}_2: T \rightarrow G$ by $T'$, which is an algebraic subgroup of $G$. Then we have $$T \xhookrightarrow{} \gmmath \times T'$$ where both $\mathrm{pr}_1$ and $\mathrm{pr}_2$, when restricted to $T$, are surjective homomorphisms. Denote $T \cap (\gmmath \times \{\mathrm{id}_{T'}\})$ by $N$ and $T \cap \{\mathrm{id}_{\gmmath}\} \times T'$ by $N'$. By Goursat's Lemma \ref{goursat} (see \cite[Proposition 2.16]{lewis1999survey} for example) stated below, we obtain an isomorphism of algebraic groups $T'/N'\cong \gmmath/N$, which induces a surjective homomorphism of $\mathbb{Q}$-algebraic groups $T' \twoheadrightarrow \gmmath/N$. Note that $\gmmath/N$ is either isomorphic to $\gmmath$ or $\{\mathrm{id}\}$. However $T'$ being an algebraic subgroup of $G$, cannot admit a surjective homomorphism to $\gmmath$ by the assumption. Hence $\gmmath/N$ is equal to $\{\mathrm{id}\}$. Therefore we have that $N=T \cap \gmmath \times \{\mathrm{id}_{T'}\}=\gmmath \times \{\mathrm{id}_{T'}\}$.
\end{proof}
\begin{lemma}[Goursat's lemma] \label{goursat}
Let $G_1$ and $G_2$ be two algebraic groups, and $H$ be an algebraic subgroup of $G_1 \times G_2$ whose projections to both $G_1$ and $G_2$ are surjective. Denote $H \cap G_1 \times \{\mathrm{id}_{G_2}\}$ by $N_1$ and $H \cap \{\mathrm{id}_{G_1}\}\times G_2$ by $N_2$. Then $N_1$ is a normal algebraic subgroup of $G_1$ and $N_2$ is a normal algebraic subgroup of $G_2$. Moreover, the natural image of $H$ in $G_1/N_1 \times G_2/N_2$ is the graph of an algebraic group isomorphism between $G_1/N_1$ and $G_2/N_2$.

\end{lemma}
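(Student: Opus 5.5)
The statement to prove is Goursat's lemma for algebraic groups. I would argue functorially, working with points in arbitrary $\mathbb{Q}$-algebras, or equivalently over $\qbar$ plus a descent remark, since all groups in play are smooth in characteristic zero.

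\textbf{Step 1: normality of $N_1$ and $N_2$.} Note that $N_1$ is an algebraic subgroup of $H$; identified with its image in $G_1$, it is a closed subgroup of $G_1$. For normality, take $g_1\in G_1(R)$ and $n\in N_1(R)$. Surjectivity of $\mathrm{pr}_1|_H$ gives, after fppf-locally enlarging $R$, an element $(g_1,g_2)\in H$. Then
$$(g_1,g_2)(n,1)(g_1,g_2)^{-1}=(g_1ng_1^{-1},1)\in H,$$
so $g_1ng_1^{-1}\in N_1$. The usual sheaf argument, or simply checking on $\qbar$-points and using that the groups are reduced, gives normality. The same argument applies to $N_2$.

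\textbf{Step 2: the image of $H$ is a graph.} Form the quotients $G_i/N_i$, which are algebraic groups since $N_i$ is normal, and let $\bar H$ be the image of $H$ in $G_1/N_1\times G_2/N_2$. I claim $\bar H$ is a closed subgroup whose projections are still surjective and whose intersections with the two factors are trivial.

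For triviality, suppose $(h_1,h_2)\in H$ with $h_2\in N_2$. Then $(1,h_2)\in H$, so $(h_1,1)=(h_1,h_2)(1,h_2)^{-1}\in H$, hence $h_1\in N_1$. Thus the kernel of $\bar H\to G_2/N_2$ is trivial, and symmetrically for the other projection.

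A subgroup whose projections are both isomorphisms is the graph of $\phi=\mathrm{pr}_2\circ(\mathrm{pr}_1|_{\bar H})^{-1}$. In characteristic zero, a bijective homomorphism of algebraic groups with trivial scheme-theoretic kernel is an isomorphism. Here the kernel is trivial scheme-theoretically, and in any case reducedness holds in characteristic zero. So $\mathrm{pr}_i|_{\bar H}$ is an isomorphism, and $\phi$ is an isomorphism of algebraic groups.

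\textbf{Main obstacle.} The only delicate point is passing from pointwise statements to scheme-theoretic ones: closedness of images and isomorphism from bijectivity. I would handle this by citing that images of homomorphisms of algebraic groups are closed and that quotient maps by normal subgroups exist. I would then invoke Cartier's theorem that algebraic groups in characteristic zero are smooth, so that checking everything on $\qbar$-points suffices.
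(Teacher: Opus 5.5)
Your proof is correct. The paper does not prove this lemma; it quotes it as a standard fact with a pointer to \cite[Proposition 2.16]{lewis1999survey}. So your argument fills in what the paper leaves to the reference rather than competing with a proof in the text.

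What you give is the classical group-theoretic proof:
\begin{enumerate}
\item normality of $N_1$, by conjugating $(n,1)$ by a lift $(g_1,g_2)\in H$ of $g_1$;
\item the computation that $(h_1,h_2)\in H$ with $h_2\in N_2$ forces $(h_1,1)\in H$, so the image $\bar H$ meets both factors trivially and is the graph of $\mathrm{pr}_2\circ(\mathrm{pr}_1|_{\bar H})^{-1}$.
\end{enumerate}
You upgrade this to algebraic groups by working with $R$-points and fppf-local lifts through the faithfully flat maps $H\to G_1$ and $H\to\bar H$. That upgrade is what the paper relies on in Corollary~\ref{gminside}. There it uses an isomorphism $T'/N'\cong\mathbb{G}_{\mathrm{m}}/N$ \emph{of algebraic groups} to produce a surjection $T'\twoheadrightarrow\mathbb{G}_{\mathrm{m}}/N$, which a purely abstract-group version of the lemma would not supply.

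One small remark: the functorial version you sketch already works over any field, since a faithfully flat homomorphism with trivial scheme-theoretic kernel is an isomorphism. Cartier's theorem is needed only if you shortcut the argument by checking normality and bijectivity on $\overline{\mathbb{Q}}$-points alone.
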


\section{Proof of the Theorem}
Certain properties of simple abelian varieties of type IV are needed. Recall that an abelian variety $A$ is of \textit{type IV} if its endomorphism algebra $R = \mathrm{End}^\circ(A)$ is a type IV division algebra (see \cite[Ch.~19]{mumford1970abelian} for the precise definition); in particular, its centre $E$ is a CM-field. Let $V = \betti$ and fix a polarization form $\phi$. Then the associated Rosati involution $\dag$ on $R$ satisfies $\phi(f(v),w) = \phi(v, f^\dag(w))$. Crucially, for abelian varieties of type IV, the restriction of $\dag$ to $E$ coincides with complex conjugation on the CM-field $E$ \cite[pp.~201-202]{mumford1970abelian}.

\begin{lemma}\label{hodgecenter}
Assuming the above setup and notation, we have $\mathrm{Z}(\mathrm{Hdg}(A))\subset\mathrm{U}_{E}$.
\end{lemma}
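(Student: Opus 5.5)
We prove $\mathrm{Z}(\mathrm{Hdg}(A))\subset\mathrm{U}_{E}$ in two steps: first place the centre inside the torus $\resgmmath$ cut out by the centre $E$ of $R$, and then use the polarization to impose the unitary condition $x\,\tau(x)=1$.

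\textbf{Step 1: the centre lies in $\resgmmath$.} By Theorem \ref{mainfact} and the Hodge-theoretic analogue, $\mathrm{End}_{\mathrm{Hdg}}(V)\cong R$. The argument of Lemma \ref{centreincentre}, applied to $\mathrm{Hdg}(A)$, shows that every element of $\mathrm{Z}(\mathrm{Hdg}(A))$ commutes with $\mathrm{Hdg}(A)$. Since the Hodge group and the Mumford–Tate group have the same commutant in $\mathrm{End}(V)$ (Lemma \ref{hodgetimesgm}), such an element lies in $\mathrm{End}_{\mathrm{Hdg}}(V)=R$. On the other hand, $\mathrm{Hdg}(A)$ commutes with all of $R$, so any element of $\mathrm{Z}(\mathrm{Hdg}(A))$, being an element of $\mathrm{Hdg}(A)$, commutes with all of $R$. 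Hence it lies in the centre $E$ of $R$. Doing this functorially on $S$-points for $\mathbb{Q}$-algebras $S$ gives
$$\mathrm{Z}(\mathrm{Hdg}(A))(S)\subset (E\otimes_{\mathbb{Q}}S)^{\times}=\resgmmath(S),$$
where $E$ acts on $V$ through $R\subset\mathrm{End}(V)$ and this action is faithful.

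\textbf{Step 2: the unitary condition.} It is standard (see \cite{moonen2004introduction}) that $\mathrm{Hdg}(A)\subset\mathrm{Sp}(V,\phi)$. The reason is that $\phi$ is a morphism of Hodge structures $V\otimes V\to\mathbb{Q}(-1)$, so the circle group $\{z\overline{z}=1\}$ preserves $\phi$, and hence so does its Zariski closure. So for $x\in\mathrm{Z}(\mathrm{Hdg}(A))(S)\subset (E\otimes S)^{\times}$ and all $v,w$ we have
$$\phi(xv,xw)=\phi(v,w).$$
By the defining property of the Rosati involution, extended $S$-linearly,
$$\phi(xv,xw)=\phi(v,x^{\dag}xw).$$
Nondegeneracy of $\phi$ then gives $x^{\dag}x=1$. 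For type IV, $\dag|_E=\tau$ by \cite[pp.~201-202]{mumford1970abelian}, so $x\,\tau(x)=1$, which says exactly that $x\in\mathrm{U}_E(S)$.

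\textbf{Main obstacle.} The delicate point is Step 1: checking that the commutant of $\mathrm{Hdg}(A)$ equals $R$, and doing the scheme-theoretic bookkeeping on $S$-points, rather than just on $\mathbb{Q}$-points, so that we get a genuine inclusion of algebraic groups. I would handle the commutant by noting that $\mathrm{MT}(A)=\gmmath\cdot\mathrm{Hdg}(A)$ and that scalars commute with everything, so the two groups have the same commutant. For $S$-points, the centralizer of $\mathrm{Hdg}(A)$ in $\mathrm{GL}(V)$ is the unit group of the $\mathbb{Q}$-algebra $R$, and its formation commutes with base change since it is cut out by linear equations.
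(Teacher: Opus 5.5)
Your proposal is correct and follows essentially the same route as the paper: you place $\mathrm{Z}(\mathrm{Hdg}(A))$ inside $\mathrm{Z}(R)^{\times}=E^{\times}$ using the commutant $\mathrm{End}_{\mathrm{Hdg}(A)}(V)=R$, and then combine invariance of $\phi$ under $\mathrm{Hdg}(A)$, the Rosati involution restricting to complex conjugation on $E$, and nondegeneracy of $\phi$ to get $x\,\tau(x)=1$. Your extra bookkeeping on $S$-points and your justification of $\mathrm{Hdg}(A)\subset\mathrm{Sp}(V,\phi)$ only make explicit what the paper leaves implicit.
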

\begin{proof}
This is well known. We include the proof for completeness of exposition. Note that
$\mathrm{End}_{\mathrm{Hdg}(A)}(V)=\mathrm{End}_{\mathrm{MT}(A)}(V)=R.$
By definition we have
$\mathrm{Z}(G)=\mathrm{End}_{G}(V) \cap G$ for any algebraic group $G\subset\mathrm{GL}(V)$. Thus $\mathrm{Z}(\mathrm{Hdg}(A))\subset R$. Moreover, any element in $R$ commutes with $\mathrm{Hdg}(A)$, hence $\mathrm{Z}(\mathrm{Hdg}(A))$ is naturally a subgroup of $\mathrm{Z}(R)^{\times}=E^{\times}$.
Now note that for any element $g\in \mathrm{Z}(\mathrm{Hdg}(A))$ and any $v,v'\in V$, the polarization form $\phi$ satisfies $$\phi(gv,gv')=\phi(v,v')=\phi(v,\overline{g}gv')$$ because $\phi$ is preserved by $\mathrm{Hdg}(A)$ and the Rosati involution equals the complex conjugation on $E=\mathrm{Z}(R)$. Since $\phi$ is a non-degenerate form, we deduce that $\overline{g}g=1$. 
\end{proof}
\begin{theorem}\label{thmgminside}
    For any abelian variety $A$ defined over $\qbar$, its de Rham-Betti group $\gdrbmath(A)$ contains the group of homotheties in $\mathrm{GL}(\betti)$. 
\end{theorem}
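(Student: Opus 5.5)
The plan is to trap the centre of $\gdrbmath(A)$ inside a group of the shape $\gmmath\times(\text{product of }\mathrm{U}_{E_i})$. Lemma~\ref{surjectiontogmcompatible} says this centre surjects onto $\gmmath$ via the determinant. Corollary~\ref{gminside} then forces the full $\gmmath$-factor to lie in the centre, and hence in $\gdrbmath(A)$.

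\textbf{Step 1: reduce to the centre of the Mumford--Tate group.} By Lemma~\ref{centreincentre}, $\mathrm{Z}(\gdrbmath(A))\subset \mathrm{Z}(\mathrm{MT}(A))$. By Lemma~\ref{hodgetimesgm}, $\mathrm{MT}(A)=\gmmath\cdot\mathrm{Hdg}(A)$, with $\gmmath$ the homotheties. Hence $\mathrm{Z}(\mathrm{MT}(A))=\gmmath\cdot \mathrm{Z}(\mathrm{Hdg}(A))$.

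\textbf{Step 2: describe $\mathrm{Z}(\mathrm{Hdg}(A))$.} Write $A$ up to isogeny as $\prod_i A_i^{n_i}$ with the $A_i$ simple and pairwise non-isogenous. Then $\mathrm{Hdg}(A)\subset\prod_i \mathrm{Hdg}(A_i)$, embedded diagonally on each isotypic block, and its centre commutes with $\mathrm{End}^{\circ}(A)$. So $\mathrm{Z}(\mathrm{Hdg}(A))$ lies in $\prod_i \mathrm{Z}(\mathrm{End}^\circ(A_i))^\times$, and it preserves the product polarization on each block.

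For $A_i$ of type IV, the argument of Lemma~\ref{hodgecenter} gives the $i$-th component in $\mathrm{U}_{E_i}$. For $A_i$ of types I--III, the centre $F_i$ is totally real and the Rosati involution is trivial on it. The same argument then gives components with $g^2=1$, i.e.\ a finite group $\mu_2$-type. Its identity component is trivial, so it contributes nothing after passing to connected components, which are tori since the relevant groups are connected and reductive by Theorem~\ref{gdrbconnectedness}. Thus the identity component of $\mathrm{Z}(\mathrm{Hdg}(A))$ lies in $G:=\mathrm{U}_{E_1}\times\cdots\times\mathrm{U}_{E_n}$, taken over the type IV factors. This is the step I expect to need the most care: making the ``preserves the polarization, Rosati $=$ conjugation'' argument work for the non-simple $A$, block by block, and checking that the non-CM factors contribute only finite groups.

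\textbf{Step 3: apply Corollary~\ref{gminside}.} Let $T=\mathrm{Z}(\gdrbmath(A))^\circ$, a torus. Consider the isogeny $\Phi:\gmmath\times G\to \gmmath\cdot G\supset T$, and let $\tilde T=\Phi^{-1}(T)^\circ\subset \gmmath\times G$.

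On $\Phi(t,g)=t^{-1}g$, the determinant is $t^{-2g}\cdot\det(g)$. For $g\in \mathrm{U}_{E_i}$ we have $\det g=\pm1$ on the identity component: $\det$ restricted to $\mathrm{U}_E$ cannot surject onto $\gmmath$ by Lemma~\ref{nosurjectiontogm}, so it is trivial on the connected torus. Since $T\to\gmmath$ is surjective by Lemma~\ref{surjectiontogmcompatible}, $\mathrm{pr}_1:\tilde T\to\gmmath$ is nonconstant on a connected group and hence surjective.

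Lemma~\ref{nosurjectiontogm} shows that $G$ satisfies the hypothesis of Corollary~\ref{gminside}. That corollary gives $\gmmath\times\{\mathrm{id}\}\subset\tilde T$. Applying $\Phi$, the homotheties lie in $T\subset\gdrbmath(A)$.
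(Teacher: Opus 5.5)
Your proposal is correct and follows the paper's proof essentially step for step. You place $\mathrm{Z}(\gdrbmath(A))^{\circ}$ inside $\Phi(\gmmath\times\mathrm{U}_{E_1}\times\cdots\times\mathrm{U}_{E_n})$, use the determinant to show $\mathrm{pr}_1$ is surjective, and conclude with Corollary~\ref{gminside}. There are two harmless deviations: you dispose of the type I--III factors directly (the Rosati involution is trivial on the totally real centre, which forces $g^2=1$) instead of citing Tankeev's semisimplicity of $\mathrm{Hdg}(A_i)$, and you obtain surjectivity of $\mathrm{pr}_1$ from the formula $\det\Phi(t,g)=t^{-2\dim A}$ rather than by contradiction with Lemma~\ref{nosurjectiontogm}.
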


\begin{proof}
By the Albert classification theorem (for example see \cite[Chapter 19]{mumford1970abelian}), we have that $A$ is isogenous to $A_1\times A_2 \times \cdots \times A_{m}$ ($m\geq 1$) where each $A_{i}$ is a simple abelian variety defined over $\qbar$. Moreover, each $A_{i}$ has endomorphism type I, II, III or IV. By the Tannakian formalism, we obtain the following diagram. $$\begin{tikzcd}[column sep=small, row sep=small]
\mathrm{MT}(A) \arrow[rr, "\iota", hook] &        & \mathrm{MT}(A_1)\times\cdots\times\mathrm{MT}(A_{m})\arrow[ld, two heads] \arrow[d, "\mathrm{pr}_{m}"', two heads] \arrow[lld, "\mathrm{pr}_1"', two heads] \\
\mathrm{MT}(A_1)                         & \cdots & \mathrm{MT}(A_{m})                                                                                                                                                                                                    
\end{tikzcd}$$ where each $\tilde{\mathrm{pr}_{i}}:=\mathrm{pr}_{i}\circ\iota$ is a surjective homomorphism of algebraic groups. We thus deduce the following relation among centres of the Hodge groups 
$$\mathrm{Z}(\mathrm{Hdg}(A))\xhookrightarrow{}\mathrm{Z}(\mathrm{Hdg}(A_1))\times\cdots\times\mathrm{Z}(\mathrm{Hdg}(A_{m})).$$
For each $i\in\{1,\dots,m\}$, if $A_{i}$ is of type I, II or III, then by \cite[Lemma 1.4]{tankeev1978algebraic} (or \cite[Proposition 1.24]{moonen2004introduction}), we have that $\mathrm{Hdg}(A_{i})$ is a semisimple algebraic group and $\mathrm{Z}(\mathrm{Hdg}(A_{i}))^{\circ}$ is trivial. If $A_{i}$ is of type IV, then by Lemma \ref{hodgecenter}, $\mathrm{Z}(\mathrm{Hdg}(A_{i}))\subset\mathrm{U}_{E_{i}}$ for the CM field $E_{i}$. Hence if there are no type IV factors among $A_1,\dots,A_{m}$, $\mathrm{Z}(\mathrm{Hdg}(A))^{\circ}$ is trivial. Otherwise, assuming without loss of generality that $A_1,\dots, A_{n}$ are the factors of type IV, then $\mathrm{Z}(\mathrm{Hdg}(A))^{\circ}$ is a connected subgroup of $\mathrm{U}_{E_1}\times\cdots\times\mathrm{U}_{E_{n}}$. 
By Lemma \ref{surjectiontogmcompatible}, we have the following commutative diagram where $\Phi(t,g) = t^{-1}g$ for $(t,g) \in \gmmath \times \mathrm{Z}(\mathrm{Hdg}(A))$.
\begin{equation}\label{typeivkeydiagram}
\begin{tikzcd}[column sep=1.5em, row sep=small]
\Phi^{-1}(\mathrm{Z}(\gdrbmath(A)))^{\circ} \arrow[d, hook] \arrow[rr, two heads] & & \mathrm{Z}(\gdrbmath(A))^{\circ} \arrow[d, hook] \arrow[rr, "\mathrm{det}", two heads] & & \gmmath \arrow[d, equal] \\
\gmmath \times \mathrm{Z}(\mathrm{Hdg}(A))^{\circ}\arrow[rr, "\Phi", two heads] & & \mathrm{Z}(\mathrm{MT}(A))^{\circ}  \arrow[rr, "\mathrm{det}", two heads] & & \gmmath   
\end{tikzcd}
\end{equation} 
Denote by $\mathrm{pr}_1$ the projection from $\gmmath \times \mathrm{Z}(\mathrm{Hdg}(A))^{\circ}$ to $\gmmath$. Then we claim that $\mathrm{pr}_1|_{\Phi^{-1}(\mathrm{Z}(\gdrbmath(A)))^{\circ}}$ is a surjective homomorphism. If not, then its image is trivial, which implies that $\Phi^{-1}(\mathrm{Z}(\gdrbmath(A)))^{\circ}$ lies in $\{\mathrm{id}\}\times\mathrm{Z}(\mathrm{Hdg}(A))^{\circ}$. However, the first row of diagram (\ref{typeivkeydiagram}) implies that $\Phi^{-1}(\mathrm{Z}(\gdrbmath(A)))^{\circ}$ admits a surjection to $\gmmath$. This gives a contradiction to Lemma \ref{nosurjectiontogm}.  Invoking Corollary \ref{gminside}, it follows that $$\Phi^{-1}(\mathrm{Z}(\gdrbmath(A)))\cap \gmmath \times \{\mathrm{id}\}=\gmmath \times \{\mathrm{id}\} \xhookrightarrow{} \gmmath \times \mathrm{Z}(\mathrm{Hdg}(A))\xhookrightarrow{}\gmmath\times\mathrm{U}_{E_1}\times\cdots\times\mathrm{U}_{E_{n}}.$$ The same argument applies to the case where $A$ does not have a type IV factor. Finally we obtain that $\gmmath\xhookrightarrow{}\mathrm{Z}(\gdrbmath(A))^{\circ}$. Combining this with the definition of $\Phi$ concludes the proof.
\end{proof}
Let $X$ be a hyper-Kähler variety defined over $\qbar$. Denote by $W:=\mathrm{H}^{2}_{\mathrm{dRB}}(X,\mathbb{Q})$.
\begin{corollary}\label{corgminhk}
Under the above set up, we have $\gdrbmath(W)$ contains $\gmmath$ as the subgroup of homotheties in $\mathrm{GL}(W)$.    
\end{corollary}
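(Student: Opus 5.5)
The plan is to transport Theorem \ref{thmgminside} from the abelian variety side to $W$ through the Kuga--Satake construction, realized in the de Rham--Betti category. By \cite[Proposition 6.4.3]{andre1996shafarevich} and \cite[Theorem 6.1]{ksv2026rhambetticlassescoefficients}, there is an abelian variety $A$ over $\qbar$ (the Kuga--Satake abelian variety attached to $X$, after a finite extension, which is harmless since everything is over $\qbar$). For this $A$, the dRB structure $W$, or at least its primitive part $P$ with respect to a polarization, lies in $\langle \drba\rangle^{\otimes}$ up to a Tate twist. Concretely, I would use an embedding of dRB structures
\[
P \otimes \mathbb{Q}_{\mathrm{dRB}}(1) \hookrightarrow \mathrm{End}(\drba) = \drba^{\vee}\otimes \drba .
\]
Equivalently, one can use $P(1)\otimes \drba \to \drba$ as a morphism of dRB structures. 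This second form is the precise meaning of the dRB Kuga--Satake correspondence in the cited theorem.

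\textbf{Step 1: an $\mathrm{H}^{2}$-type subquotient.} From the embedding, $P$ is a subquotient of a direct sum of $\drba^{\otimes a}\otimes(\drba^{\vee})^{\otimes b}\otimes \mathbb{Q}_{\mathrm{dRB}}(-1)$ with $a-b=0$. Alternatively, absorb the twist: $\mathbb{Q}_{\mathrm{dRB}}(-1)$ lies in $\langle\drba\rangle^{\otimes}$ as the quotient $\wedge^{2g}\drba\otimes \mathbb{Q}_{\mathrm{dRB}}(g-1)$, since $\wedge^{2g}\drba\cong \mathbb{Q}_{\mathrm{dRB}}(-g)$, as shown in the proof of Lemma \ref{surjectiontogmcompatible}. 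Better still, a polarization gives a dRB class in $\wedge^2\drba\otimes \mathbb{Q}_{\mathrm{dRB}}(1)$, so $\mathbb{Q}_{\mathrm{dRB}}(-1)\subset \drba^{\otimes 2}$. Hence $P$ sits in $\langle\drba\rangle^{\otimes}$ as a subquotient of $\drba^{\otimes(b+2)}\otimes(\drba^{\vee})^{\otimes b}$. The net tensor degree is $2$.

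\textbf{Step 2: how the homotheties act.} By Deligne's Proposition 2.21, there is a surjection $\gdrbmath(A)\twoheadrightarrow \gdrbmath(P)$. By Theorem \ref{thmgminside}, the homotheties $t\cdot\mathrm{id}$ lie in $\gdrbmath(A)$. Such an element acts on $\drba^{\otimes(b+2)}\otimes(\drba^{\vee})^{\otimes b}$ by the scalar $t^{2}$, and hence acts on $P$ by $t^{2}$. The image of $\gmmath$ is therefore the full group of homotheties of $P_{\mathrm{B}}$, since $t\mapsto t^2$ is surjective on $\gmmath$. This shows $\gmmath\subset\gdrbmath(P)$.

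\textbf{Step 3: from $P$ to all of $W$.} Let $h$ be an ample class. Then $W=P\oplus \mathbb{Q}h$ as dRB structures, and $\mathbb{Q}h\cong \mathbb{Q}_{\mathrm{dRB}}(-1)$ because $h$ is algebraic. The group $\gdrbmath(W)$ is the image of $\gdrbmath(P\oplus\mathbb{Q}_{\mathrm{dRB}}(-1))$. Both summands lie in $\langle\drba\rangle^{\otimes}$ with net degree $2$, using Step 1 for $\mathbb{Q}_{\mathrm{dRB}}(-1)$. So the image of $t\cdot\mathrm{id}\in\gdrbmath(A)$ in $\mathrm{GL}(W_{\mathrm{B}})$ is $t^2\cdot\mathrm{id}_W$, and $\gmmath\subset \gdrbmath(W)$. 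One point needs care: the surjection from $\gdrbmath(A)$ must be applied to the Tannakian category generated by $W$, which is contained in $\langle\drba\rangle^{\otimes}$. So $W$ itself must lie there, not merely $P$, and Step 1 applied to both summands gives exactly this.

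\textbf{Main obstacle.} The hard part is Step 1: making sure the Kuga--Satake correspondence of \cite[Theorem 6.1]{ksv2026rhambetticlassescoefficients} is genuinely a morphism of dRB structures, and tracking the Tate twist precisely so that the net degree comes out nonzero. If it came out zero, the homotheties would act trivially. I would first read off the explicit form $P(1)\hookrightarrow \mathrm{End}(\drba)$, which holds over a finite extension. Then I would verify the degree count directly: $\mathrm{End}$ has degree $0$, and the twist $\mathbb{Q}_{\mathrm{dRB}}(-1)$ contributes degree $2$.
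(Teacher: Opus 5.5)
Your proposal is correct and follows essentially the paper's route: place $W$ inside $\langle \mathrm{H}^1_{\mathrm{dRB}}(A,\mathbb{Q})\rangle^{\otimes}$ in net tensor degree $2$ via the dRB Kuga--Satake correspondence, then use the Tannakian surjection $\gdrbmath(A)\twoheadrightarrow\gdrbmath(W)$ together with Theorem \ref{thmgminside}, whose homotheties act on $W$ by $t\mapsto t^{\pm 2}$. The paper simply asserts that $W$ is a direct summand of $\mathrm{H}^1_{\mathrm{dRB}}(A,\mathbb{Q})^{\otimes 2}$, whereas you split $W=P\oplus\mathbb{Q}h$ and track the Tate twist through the polarization class, which is a harmless and slightly more careful elaboration of the same argument.
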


\begin{proof}
By the Kuga-Satake correspondence, there exists an abelian variety $A$ defined over $\qbar$ such that $W$ is a direct summand dRB substructure of $V':=\drba\otimes\drba$ (see \cite[Proposition 6.4.3]{andre1996shafarevich} and \cite[Theorem 6.1]{ksv2026rhambetticlassescoefficients}). Thus by Tannakian duality, $\gdrbmath(W)$ is equal to the restriction of the induced action of $\gdrbmath(A)$ on the direct summand $W$ of $V'$. Via $\tilde{\Phi}$, $\gmmath\subset\gdrbmath(A)$ acts on $V'$ and thus on $W$ as $t\mapsto t^{-2}$. Since $\gdrbmath(W)$ is a $\mathbb{Q}$-algebraic subgroup of $\mathrm{GL}(W)$, it necessarily contains the multiplicative group of homotheties.
\end{proof}

\section{Some Corollaries}
Theorem \ref{thmgminside} allows us to define a dRB analogue of the Hodge group. Fix an abelian variety $A/\qbar$ and let $\Phi: \gmmath \times \mathrm{Hdg}(A) \twoheadrightarrow \mathrm{MT}(A)$ be the isogeny from Lemma \ref{hodgetimesgm}.

\begin{definition}\label{gdrbhdefn}
The \textit{de Rham-Betti Hodge group} of $A$, denoted by $\gdrbhmath(A)$, is the image of $\Phi^{-1}(\gdrbmath(A))^\circ$ under the projection map from $\gmmath \times \mathrm{Hdg}(A)$ to $\mathrm{Hdg}(A)$.
\end{definition}
By construction, $\gdrbhmath(A)$ is connected and reductive. Recall from Lemma \ref{gdrbconnectedness} that $\gdrbmath(A)$ is a connected algebraic group as well. Therefore the isogeny $\Phi$ restricts to a surjective homomorphism\begin{equation}\label{eqgmgdrbha}
 \tilde{\Phi}: \gmmath \times \gdrbhmath(A) \twoheadrightarrow \gdrbmath(A)   
\end{equation} sending $(t,g)$ to $ t^{-1}g$. 
\begin{remark}
By Theorem \ref{thmgminside}, the group $\gdrbhmath(A)$ defined above is exactly the group $G_{\mathrm{dRB}}^{1}(A)$ introduced in \cite[p.~21]{ksv2026rhambetticlassescoefficients}, where it is defined as the connected component of the kernel of $\mathrm{det}\colon\gdrbmath(A)\to\gmmath$.
\end{remark}
It is demonstrated in \cite[Lemma 2.8]{bost2016some}  that for any abelian variety $A$ defined over $\qbar$, the dRB structure $\mathbb{Q}_{\mathrm{dRB}}(-1)$ is an object in the Tannakian category $\langle\mathrm{H}_{\mathrm{dRB}}^1(A,\mathbb{Q})\rangle^{\otimes}$. This is because any polarization form on $A$ induces a non-zero morphism of dRB structures \begin{equation}\label{poladrb}
\phi:\drba\otimes\drba\rightarrow\mathbb{Q}_{\mathrm{dRB}}(-1).
\end{equation} 
Recall by Tannakian duality, $\langle\mathrm{H}_{\mathrm{dRB}}^1(A,\mathbb{Q})\rangle^{\otimes}$ is tensor equivalent to the representation category of $\gdrbmath(A)$.
\begin{corollary}\label{gmweights}
Via the group homomorphism (\ref{eqgmgdrbha}), $\gmmath\xhookrightarrow{}\gdrbmath(A)$ acts as $\gmmath \rightarrow \gmmath:t \mapsto t^{-2}$ on $\mathbb{Q}_{\mathrm{dRB}}(-1) \in \mathrm{Ob}\langle\mathrm{H}_{\mathrm{dRB}}^1(A,\mathbb{Q})\rangle^{\otimes}$. 
\end{corollary}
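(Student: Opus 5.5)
We need to show that the subgroup $\gmmath\subset\gdrbmath(A)$, embedded via $t\mapsto\tilde{\Phi}(t,1)=t^{-1}\cdot\mathrm{id}$, acts on $\mathbb{Q}_{\mathrm{dRB}}(-1)$ through the character $t\mapsto t^{-2}$. The plan is to compute this action through the polarization morphism (\ref{poladrb}), using that morphisms in $\langle\drba\rangle^{\otimes}$ are $\gdrbmath(A)$-equivariant.

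First, identify how this $\gmmath$ acts on the Betti realizations. By Theorem \ref{thmgminside} and the definition of $\tilde{\Phi}$ in (\ref{eqgmgdrbha}), the element $t\in\gmmath$ maps to the homothety $t^{-1}\cdot\mathrm{id}$ on $\mathrm{H}^1_{\mathrm{B}}(A,\mathbb{Q})$. The action on the tensor product $\drba\otimes\drba$ is the tensor square of this action, so $t$ acts on $\mathrm{H}^1_{\mathrm{B}}\otimes\mathrm{H}^1_{\mathrm{B}}$ by multiplication by $t^{-2}$.

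Second, apply Tannakian duality. The polarization $\phi$ of (\ref{poladrb}) is a nonzero morphism in $\langle\drba\rangle^{\otimes}$, so its Betti realization $\omega_{\mathrm{B}}(\phi)$ is $\gdrbmath(A)$-equivariant. Since $\mathbb{Q}_{\mathrm{dRB}}(-1)$ is one-dimensional, $\omega_{\mathrm{B}}(\phi)$ is surjective. Choose $v\otimes w$ with $\phi(v,w)\neq 0$. Equivariance then gives
\[
t\cdot\phi(v,w)=\phi\bigl(t\cdot(v\otimes w)\bigr)=t^{-2}\phi(v,w).
\]
Hence $t$ acts on the line $\omega_{\mathrm{B}}(\mathbb{Q}_{\mathrm{dRB}}(-1))$ by $t^{-2}$, i.e.\ through the character $t\mapsto t^{-2}$.

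The only delicate point is bookkeeping: the chosen $\gmmath$ enters through $\tilde{\Phi}(t,1)=t^{-1}$ rather than $t$, which is why the exponent is $-2$ and not $2$. Everything else is formal once we know that $\mathbb{Q}_{\mathrm{dRB}}(-1)$ appears as a quotient of $\drba^{\otimes 2}$ via $\phi$.
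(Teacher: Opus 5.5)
Your proposal is correct and follows the same route as the paper. The paper's one-line proof also rests on compatibility of the $\tilde{\Phi}$-action of $\mathbb{G}_{\mathrm{m}}$ with the nonzero equivariant polarization morphism (\ref{poladrb}): $t$ acts by $t^{-1}$ on $\mathrm{H}^1_{\mathrm{B}}$, hence by $t^{-2}$ on the tensor square, and you have just written out the equivariance computation that the paper leaves implicit.
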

\begin{proof}
The action of $\gmmath$ via $\tilde{\Phi}$ on both sides of the formula (\ref{poladrb}) has to be compatible, which implies the corollary. 
\end{proof}
 
Recall that any character of the Hodge group of an abelian variety is trivial. 
\begin{lemma}\label{chartrivial}
    Let $A$ be an abelian variety defined over $\qbar$. Then any character of $\gdrbhmath(A)$ is trivial. 
\end{lemma}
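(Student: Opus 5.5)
The plan is to show that $\gdrbhmath(A)$ has no nontrivial character. Since it is connected and reductive, such a character would factor through its maximal abelian quotient, which is isogenous to the connected centre $\mathrm{Z}(\gdrbhmath(A))^{\circ}$. So it suffices to show that this connected centre admits no surjection onto $\gmmath$.

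First I place $\mathrm{Z}(\gdrbhmath(A))^{\circ}$ inside a known torus. By construction $\gdrbhmath(A)\subset \mathrm{Hdg}(A)$, and $\Phi$ is an isogeny, so $\tilde{\Phi}\colon \gmmath\times\gdrbhmath(A)\twoheadrightarrow\gdrbmath(A)$ is also an isogeny. The centre of $\gdrbhmath(A)$ then maps into $\mathrm{Z}(\gdrbmath(A))$, since $\gmmath$ is central in $\gdrbmath(A)$ by Theorem \ref{thmgminside}.

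By Lemma \ref{centreincentre}, $\mathrm{Z}(\gdrbmath(A))$ lies in $\mathrm{Z}(\mathrm{End}^{\circ}(A))^{\times}\cap \mathrm{MT}(A)$. Any element of $\mathrm{Z}(\gdrbhmath(A))^{\circ}$ lies in $\mathrm{Hdg}(A)$ and commutes with $\mathrm{End}^{\circ}(A)$. It therefore lies in $\mathrm{Z}(\mathrm{End}^{\circ}(A))\cap\mathrm{Hdg}(A)$, i.e. in the centre of the endomorphism algebra intersected with the Hodge group.

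The step I expect to be the main obstacle is the following. I use the decomposition $A\sim A_1\times\cdots\times A_m$ exactly as in the proof of Theorem \ref{thmgminside}. The group $\mathrm{Z}(\mathrm{End}^{\circ}(A))^{\times}\cap\mathrm{Hdg}(A)$ commutes with $\mathrm{Hdg}(A)$, so it lies in $\mathrm{Z}(\mathrm{Hdg}(A))$. Its identity component then sits inside $\mathrm{U}_{E_1}\times\cdots\times\mathrm{U}_{E_n}$ via Lemma \ref{hodgecenter} and the semisimplicity of the type I–III Hodge groups. To be careful I would argue through each projection $\tilde{\mathrm{pr}_i}$.

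Applying Lemma \ref{nosurjectiontogm}, the torus $\mathrm{Z}(\gdrbhmath(A))^{\circ}$ admits no surjection onto $\gmmath$.

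Finally, let $\chi$ be a character of $\gdrbhmath(A)$. The derived subgroup of a connected reductive group has no nontrivial characters, and the centre and derived subgroup together generate the group. Restricting $\chi$ to $\mathrm{Z}(\gdrbhmath(A))^{\circ}$ gives a homomorphism to $\gmmath$ whose image is a connected subgroup of $\gmmath$. That image is either trivial or all of $\gmmath$, and the previous step excludes $\gmmath$. So $\chi$ is trivial on both the centre and the derived subgroup, and hence $\chi$ is trivial.
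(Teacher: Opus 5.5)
Your proposal is correct and follows the paper's route. The paper simply asserts $\mathrm{Z}(\gdrbhmath(A))^{\circ}\subset\mathrm{Z}(\mathrm{Hdg}(A))^{\circ}$ and then reruns the argument of Theorem \ref{thmgminside} (embedding into $\mathrm{U}_{E_1}\times\cdots\times\mathrm{U}_{E_n}$ and applying Lemma \ref{nosurjectiontogm}), which is what you do, except that you also justify that inclusion via $\gmmath\subset\gdrbmath(A)$ and Lemma \ref{centreincentre}, and you spell out the reduction from characters of a connected reductive group to its connected centre; note that, as in the paper, ``character'' must mean a $\mathbb{Q}$-rational one, since that is all Lemma \ref{nosurjectiontogm} controls.
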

\begin{proof}
Note that $\mathrm{Z}(\gdrbhmath(A))^{\circ}\subset\mathrm{Z}(\mathrm{Hdg}(A))^{\circ}$. The proof concludes by applying the same argument used for Theorem  \ref{thmgminside}. 
\end{proof}
The following corollary justifies that $\gdrbhmath(A)$ indeed plays the role of the Hodge group in the dRB theory. 
\begin{corollary} \label{gdrbhinv}
Let $M:=\mathrm{H}^1_{\mathrm{dRB}}(A,\mathbb{Q})$ where $A$ is an abelian variety defined over $\qbar$ and let $m$ and $n$ be two non-negative integers such that $m-n$ is an even number. Then
    $$(M^{\otimes m}\otimes M^{*\otimes n})^{\gdrbhmath(A)} \otimes \mathbb{Q}_{\mathrm{dRB}}({\frac{m-n}{2}}) = (M^{\otimes m}\otimes M^{*\otimes n}\otimes \mathbb{Q}_{\mathrm{dRB}}({\frac{m-n}{2}}))^{\gdrbmath(A)}.$$ 
    
\end{corollary}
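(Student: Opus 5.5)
We will prove the two inclusions separately, using the surjection $\tilde{\Phi}\colon \gmmath\times\gdrbhmath(A)\twoheadrightarrow\gdrbmath(A)$, $(t,g)\mapsto t^{-1}g$, from (\ref{eqgmgdrbha}). Write $W:=M^{\otimes m}\otimes M^{*\otimes n}$ and $k:=\frac{m-n}{2}$. Since $\tilde{\Phi}$ is surjective, the $\gdrbmath(A)$-invariants of any object of $\langle M\rangle^{\otimes}$ coincide with its invariants under $\gmmath\times\gdrbhmath(A)$ acting through $\tilde{\Phi}$. So it suffices to compute how each factor acts on $W\otimes\mathbb{Q}_{\mathrm{dRB}}(k)$.

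First, the $\gmmath$-factor. By Theorem \ref{thmgminside} and the definition of $\tilde{\Phi}$, $t\in\gmmath$ acts on $M_{\mathrm{B}}$ by the homothety $t^{-1}$. Hence it acts on $W$ by $t^{-m}\cdot t^{n}=t^{-(m-n)}$. By Corollary \ref{gmweights}, $t$ acts on $\mathbb{Q}_{\mathrm{dRB}}(-1)$ by $t^{-2}$, so it acts on $\mathbb{Q}_{\mathrm{dRB}}(1)$, the dual of $\mathbb{Q}_{\mathrm{dRB}}(-1)$, by $t^{2}$. This uses that $\mathbb{Q}_{\mathrm{dRB}}(1)$ lies in $\langle M\rangle^{\otimes}$, as the dual of an object of a Tannakian category. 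It follows that $t$ acts on $\mathbb{Q}_{\mathrm{dRB}}(k)$ by $t^{2k}=t^{m-n}$. Therefore $\gmmath$ acts trivially on $W\otimes\mathbb{Q}_{\mathrm{dRB}}(k)$.

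Second, the $\gdrbhmath(A)$-factor. It acts on the one-dimensional object $\mathbb{Q}_{\mathrm{dRB}}(k)$ through a character $\gdrbhmath(A)\to\gmmath$, and by Lemma \ref{chartrivial} this character is trivial. Hence the $\gdrbhmath(A)$-invariants of $W\otimes\mathbb{Q}_{\mathrm{dRB}}(k)$ equal $W^{\gdrbhmath(A)}\otimes\mathbb{Q}_{\mathrm{dRB}}(k)$. The subspace $W^{\gdrbhmath(A)}$ is $\gdrbmath(A)$-stable, because $\gdrbhmath(A)$ is normal in $\gdrbmath(A)$ (its image under $\tilde{\Phi}$ is normal, since $\gmmath$ is central). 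So $W^{\gdrbhmath(A)}$ is a dRB substructure, and the tensor product on the left-hand side makes sense as a dRB structure.

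Combining the two steps, the $\gdrbmath(A)$-invariants of $W\otimes\mathbb{Q}_{\mathrm{dRB}}(k)$ are the vectors fixed by both factors, which is $W^{\gdrbhmath(A)}\otimes\mathbb{Q}_{\mathrm{dRB}}(k)$. The main point needing care is the bookkeeping in the first step: checking that $\tilde{\Phi}$ restricted to $\gmmath$ acts on $M$ by $t^{-1}$ and on $\mathbb{Q}_{\mathrm{dRB}}(1)$ by $t^{2}$, and that these weights cancel exactly when $2k=m-n$. This is consistent with Corollary \ref{gmweights} and the polarization pairing (\ref{poladrb}): $M\otimes M$ has $\gmmath$-weight $-2$, matching the weight $-2$ of $\mathbb{Q}_{\mathrm{dRB}}(-1)$.
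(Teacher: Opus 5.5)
Your proposal is correct and follows the paper's proof: you reduce $\gdrbmath(A)$-invariants to invariants under $\gmmath\times\gdrbhmath(A)$ via the surjection $\tilde{\Phi}$, then use Corollary \ref{gmweights} for the $\gmmath$-weights and Lemma \ref{chartrivial} for the triviality of $\gdrbhmath(A)$ on $\mathbb{Q}_{\mathrm{dRB}}(\frac{m-n}{2})$. You also spell out two things the paper leaves implicit: the explicit weight cancellation $t^{-(m-n)}\cdot t^{m-n}=1$, and the normality of $\gdrbhmath(A)$ in $\gdrbmath(A)$, which makes the left-hand side a genuine dRB substructure.
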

\begin{proof}
The algebraic group homomorphism $\tilde{\Phi}$ from formula (\ref{eqgmgdrbha}) is surjective on $\qbar$-points. Therefore by the basic invariant theory of algebraic groups, we have that $$(M^{\otimes m}\otimes M^{*\otimes n}\otimes \mathbb{Q}_{\mathrm{dRB}}({\frac{m-n}{2}}))^{\gdrbmath(A)}=(M^{\otimes m}\otimes M^{*\otimes n}\otimes \mathbb{Q}_{\mathrm{dRB}}({\frac{m-n}{2}}))^{\tilde{\Phi}(\gmmath \times \gdrbhmath(A))}.$$ The proof concludes by invoking Corollary \ref{gmweights} and Corollary \ref{chartrivial}.
\end{proof}

\begin{corollary}\label{noodddrbclass}
For any abelian variety $A$ defined over $\qbar$, there is no non-zero dRB class in $\mathrm{H}^{m}_{\mathrm{dRB}}(A,\mathbb{Q}) \otimes \mathbb{Q}_{\mathrm{dRB}}(n)$ if $m \neq 2n$.
\end{corollary}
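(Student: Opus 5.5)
The plan is to use the Tannakian description: dRB classes in a dRB structure $W\in\langle\drba\rangle^{\otimes}$ are exactly the $\gdrbmath(A)$-invariant vectors of $W_{\mathrm{B}}$. This holds because a dRB class in $W$ is the same as a morphism $\mathbb{Q}_{\mathrm{dRB}}(0)\to W$, and the fiber functor identifies such morphisms with invariants. So it suffices to show that $(\mathrm{H}^{m}_{\mathrm{B}}(A,\mathbb{Q})\otimes\mathbb{Q}_{\mathrm{dRB}}(n))^{\gdrbmath(A)}=0$ when $m\neq 2n$. Here $\mathrm{H}^m_{\mathrm{dRB}}(A,\mathbb{Q})\cong\wedge^m\drba$, a direct summand of $\drba^{\otimes m}$ compatible with the dRB structures. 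Also $\mathbb{Q}_{\mathrm{dRB}}(n)$ lies in $\langle\drba\rangle^{\otimes}$, being a tensor power of $\mathbb{Q}_{\mathrm{dRB}}(-1)$ or of its dual, by \cite[Lemma 2.8]{bost2016some} and (\ref{poladrb}).

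The key step is to compute how the homotheties act. By Theorem \ref{thmgminside}, $\gdrbmath(A)$ contains the homotheties $\gmmath\subset\mathrm{GL}(\betti)$. Via $\tilde\Phi$ from (\ref{eqgmgdrbha}), $t\in\gmmath$ acts on $\betti$ by $t^{-1}$, hence on $\wedge^m\betti$ by $t^{-m}$. By Corollary \ref{gmweights}, it acts on $\mathbb{Q}_{\mathrm{dRB}}(-1)$ by $t^{-2}$, so on $\mathbb{Q}_{\mathrm{dRB}}(n)$ by $t^{2n}$; this uses that dualizing inverts the character. Therefore $t$ acts on $\mathrm{H}^m_{\mathrm{B}}(A,\mathbb{Q})\otimes\mathbb{Q}_{\mathrm{dRB}}(n)_{\mathrm{B}}$ by the scalar $t^{2n-m}$. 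When $m\neq 2n$, this character of $\gmmath$ is nontrivial. A $\gmmath$-invariant vector $v$ would then satisfy $t^{2n-m}v=v$ for all $t$, which forces $v=0$. Since a $\gdrbmath(A)$-invariant vector is in particular $\gmmath$-invariant, there are no nonzero dRB classes.

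The main obstacle is bookkeeping. One must check that the action of $\gmmath\subset\gdrbmath(A)$ on the Betti realization of $\mathbb{Q}_{\mathrm{dRB}}(n)$, viewed as an object of $\langle\drba\rangle^{\otimes}$, really is through the character $t^{2n}$, independently of how $\mathbb{Q}_{\mathrm{dRB}}(n)$ is realized inside the category. This follows from Corollary \ref{gmweights}, because any two realizations are isomorphic dRB structures, and the $\gdrbmath(A)$-action is functorial under isomorphisms in the Tannakian category. For odd $m$, the case $m\neq 2n$ holds for every $n$, which recovers the introduction's corollary on odd degrees.
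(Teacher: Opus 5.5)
Your proposal is correct and takes essentially the same route as the paper. You identify $\mathrm{H}^m_{\mathrm{dRB}}(A,\mathbb{Q})$ with $\wedge^m\mathrm{H}^1_{\mathrm{dRB}}(A,\mathbb{Q})$ and use Corollary \ref{gmweights} to see that the homotheties $\mathbb{G}_{\mathrm{m}}\subset\gdrbmath(A)$ (via $\tilde{\Phi}$) act on $\mathrm{H}^m_{\mathrm{B}}(A,\mathbb{Q})\otimes\mathbb{Q}_{\mathrm{dRB}}(n)$ through the nontrivial character $t\mapsto t^{2n-m}$, leaving no nonzero invariants; you just make explicit the bookkeeping (dRB classes as invariants, the weight $t^{2n}$ on $\mathbb{Q}_{\mathrm{dRB}}(n)$) that the paper leaves implicit.
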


\begin{proof}
    Note that $\mathrm{H}^{m}_{\mathrm{dRB}}(A,\mathbb{Q})\cong \wedge^{m}\mathrm{H}^{1}_{\mathrm{dRB}}(A,\mathbb{Q})$. Therefore, if $m \neq 2n$, then by Corollary \ref{gmweights}, the set of invariants in $\mathrm{H}^{m}_{\mathrm{dRB}}(A,\mathbb{Q}) \otimes \mathbb{Q}_{\mathrm{dRB}}(n)$ under the action of $\gmmath \xhookrightarrow{}\gdrbmath(A)$ via $\tilde{\Phi}$ is $\{0\}$.
\end{proof}

Note that in the category of all dRB structures, there are uncountably many isomorphism classes of one-dimensional dRB structures. However, recall that in the category of all $\mathbb{Q}$-Hodge structures, a one-dimensional Hodge structure is of the form $\mathbb{Q}(n)$, i.e., $\mathbb{Q}(n)\otimes_{\mathbb{Q}}\mathbb{C}$ has a single summand with bidegree $(-n,-n)$ for some $n\in\mathbb{Z}$. In parallel, we may form the following conjecture regarding one-dimensional dRB structures arising from geometry. 
\begin{conjecture}\label{onedimconj}
For an abelian variety $A/\qbar$, every one-dimensional object in $\langle\mathrm{H}^1_{\mathrm{dRB}}(A,\mathbb{Q})\rangle^{\otimes}$ is of the form $\mathbb{Q}_{\mathrm{dRB}}(k)$ for some $k \in \mathbb{Z}$.
\end{conjecture}

\begin{corollary}
    \label{groupinterpret}
Conjecture \ref{onedimconj} is equivalent to the statement that $\gmmath \cap \gdrbhmath(A) = \{\pm 1\}$.
\end{corollary}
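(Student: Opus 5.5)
Via Tannakian duality, one-dimensional objects of $\langle\drba\rangle^{\otimes}$ are the same as characters $\chi:\gdrbmath(A)\to\gmmath$ defined over $\mathbb{Q}$, up to the Betti realisation. Precomposing with the surjection $\tilde{\Phi}:\gmmath\times\gdrbhmath(A)\twoheadrightarrow\gdrbmath(A)$ of (\ref{eqgmgdrbha}) and using Lemma \ref{chartrivial}, $\chi\circ\tilde{\Phi}(t,g)=t^{a}$ for some $a\in\mathbb{Z}$. Since $\tilde{\Phi}$ is surjective, $\chi$ is determined by $a$. Moreover $a$ is constrained exactly by the requirement that $(t,g)\mapsto t^{a}$ be trivial on $\ker\tilde{\Phi}$. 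Now $\ker\tilde{\Phi}=\{(t,t): t\in\gmmath\cap\gdrbhmath(A)\}$, because $t^{-1}g=1$ forces $g=t$ to be a homothety lying in $\gdrbhmath(A)$. Hence the character group of $\gdrbmath(A)$ is identified with $\{a\in\mathbb{Z}: t^{a}=1 \text{ for all } t\in \mu:=\gmmath\cap\gdrbhmath(A)\}$.

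Next I pin down what $\mu$ can be. It is a closed subgroup of $\gmmath$, so it is either $\gmmath$ or a finite group $\mu_N$. It cannot be $\gmmath$: otherwise $\gdrbhmath(A)$ would contain the homotheties, whose determinant is a nontrivial character of $\gmmath$, contradicting Lemma \ref{chartrivial}. Also $-1\in\mu$. Indeed, $\gdrbhmath(A)$ is the image of $\Phi^{-1}(\gdrbmath(A))^{\circ}$, and by Theorem \ref{thmgminside} this contains $\gmmath\times\{1\}$. It also contains the pairs $(s,s)$ for $s\in\mathrm{Z}(\mathrm{Hdg}(A))$ a homothety. In particular $-1\in\mathrm{Hdg}(A)$, since the Hodge group contains $\mu(-1)=-\mathrm{id}$ for weight one, and this should persist in $\gdrbhmath(A)$. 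One way to see it is via Corollary \ref{gmweights}: $\tilde\Phi(-1,1)=-\mathrm{id}$ and $\tilde\Phi(1,-1)=-\mathrm{id}$ must agree. I expect to verify directly that $-\mathrm{id}\in\gdrbhmath(A)$ by writing $-\mathrm{id}=\tilde\Phi(-1,1)\in\gdrbmath(A)$ and using that $\gdrbhmath(A)=\ker(\det)^{\circ}$ (the Remark) contains the central torus part. This is the step I regard as the delicate one. Granting it, $\mu=\mu_N$ with $N$ even, and the characters of $\gdrbmath(A)$ correspond to $a\in N\mathbb{Z}$.

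Finally I match characters with Tate twists. By Corollary \ref{gmweights}, $\mathbb{Q}_{\mathrm{dRB}}(k)$ corresponds to $a=2k$, and these objects lie in the category because $\mathbb{Q}_{\mathrm{dRB}}(-1)$ does. So every one-dimensional object is some $\mathbb{Q}_{\mathrm{dRB}}(k)$ if and only if the set of allowed exponents is exactly $2\mathbb{Z}$, that is, if and only if $N=2$, which is $\mu=\{\pm1\}$. For the direction "conjecture implies $N=2$", suppose $N>2$. Then $\mu_N$ still contains $-1$, so $N$ is even, and the character set $N\mathbb{Z}$ is a proper subgroup of $2\mathbb{Z}$. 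The character $a=2$, i.e. the one defining $\mathbb{Q}_{\mathrm{dRB}}(-1)$, would then not factor through $\gdrbmath(A)$, contradicting \cite[Lemma 2.8]{bost2016some}. So in fact $N\mid 2$ automatically, and the equivalence reduces to checking that the only possible extra one-dimensional objects come from $\mu$ being smaller than $\{\pm1\}$. I will redo this bookkeeping carefully, since the direction of containment between the character lattice and $2\mathbb{Z}$ is exactly where the argument could slip.
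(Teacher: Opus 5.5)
\textbf{The gap.} The step you flag as delicate, $-\mathrm{id}\in\gdrbhmath(A)$ (i.e.\ $-1\in\mu$), cannot be proved from what is available, and each of your justifications fails.
\begin{itemize}
\item $\Phi^{-1}(\gdrbmath(A))$ does contain $\ker\Phi=\{(1,\mathrm{id}),(-1,-\mathrm{id})\}$. But $\gdrbhmath(A)$ is built from the identity component, which is $\gmmath\times\gdrbhmath(A)$. So $(-1,-\mathrm{id})$ lies in it exactly when $-\mathrm{id}\in\gdrbhmath(A)$, which is circular.
\item That $-\mathrm{id}\in\mathrm{Hdg}(A)$ comes from the Deligne torus (the image of $-1$). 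This has no dRB counterpart, and nothing known forces $\gdrbhmath(A)=\mathrm{Hdg}(A)$.
\item Comparing $\tilde\Phi(-1,1)$ with $\tilde\Phi(1,-\mathrm{id})$ presupposes that $(1,-\mathrm{id})$ is in the domain of $\tilde\Phi$.
\item $\det(-\mathrm{id})=1$ only puts $-\mathrm{id}$ in $\ker(\det)$, while $\ker(\det)^{\circ}=\gdrbhmath(A)$ is again the thing in question. If $\mu=\{1\}$, then $\tilde\Phi$ is an isomorphism and $\ker(\det)\cong\mu_{2g}\times\gdrbhmath(A)$ with $g=\dim A$. Here $-\mathrm{id}=\tilde\Phi(-1,\mathrm{id})$ lies outside the identity component.
\end{itemize}
Since $\mu\subseteq\{\pm1\}$ holds unconditionally (see below), your claim is exactly equivalent to $\mu=\{\pm1\}$. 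Proving it would prove Conjecture \ref{onedimconj} outright, which the paper leaves open; by the Remark after the corollary, it amounts to $\sqrt{2\pi\sqrt{-1}}$ not being a period of $A$. Your last paragraph is inconsistent on exactly this point: if $-1\in\mu$ always held, there would be nothing left to ``check''.

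\textbf{The step is not needed.} What is true unconditionally is $\mu\subseteq\{\pm1\}$. The exponent $a=-2$ occurs, namely for the polarization character $\chi_\phi$ attached to $\mathbb{Q}_{\mathrm{dRB}}(-1)$ (Corollary \ref{gmweights}), and it must be trivial on $\mu$, so $t^{2}=1$ for all $t\in\mu$. This is how the paper's proof begins. It is also your ``$N\mid 2$ automatically'' once the appeal to $-1\in\mu$ is removed: if $N\nmid 2$, then $-2\notin N\mathbb{Z}$.

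Now use your correct identification of the character group of $\gdrbmath(A)$ with $N\mathbb{Z}$, where $\mu=\mu_N$ and $N\in\{1,2\}$. Since $\mathbb{Q}_{\mathrm{dRB}}(k)$ corresponds to $a=2k$, the conjecture says precisely that no odd exponent occurs, i.e.\ $N\neq 1$, i.e.\ $\mu=\{\pm1\}$. So the direction ``conjecture $\Rightarrow$ $\mu=\{\pm1\}$'' must exclude $N=1$, not $N>2$ (which is excluded for free). When $N=1$, $a=-1$ (the paper's $\chi_{\frac{1}{2}}$) gives a one-dimensional object $L$ with $L^{\otimes 2}\cong\mathbb{Q}_{\mathrm{dRB}}(-1)$, which is not of the form $\mathbb{Q}_{\mathrm{dRB}}(k)$.

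Repaired this way, your argument coincides with the paper's. $\chi_\phi$ bounds $\mu$, $\chi_{\frac{1}{2}}$ gives one direction, and the parity forced by $-\mathrm{id}\in\gdrbhmath(A)$ gives the other. Your description of the character group as $\{a:\ t^{a}=1 \text{ on } \mu\}$ via $\ker\tilde\Phi$ is a tidy packaging of the same bookkeeping.
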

\begin{proof}By formula (\ref{poladrb}), the one-dimensional dRB object $\mathbb{Q}_{\mathrm{dRB}}(-1)\in\mathrm{Ob}\langle \mathrm{H}^1_{\mathrm{dRB}}(A,\mathbb{Q})\rangle^{\otimes}$ induces a character $$\chi_{\phi}:\gdrbmath(A) \rightarrow \gmmath.$$ Lemma \ref{gmweights} states that when restricted to $\gmmath \xhookrightarrow{} \gdrbmath(A)$ via $\tilde{\Phi}$, $\chi_{\phi}$ is an isogeny $\gmmath \rightarrow \gmmath$ of the form $t\mapsto t^{-2}$. Moreover, by Lemma \ref{chartrivial}, we have that $\chi_{\phi}(\gdrbhmath(A))=\{\mathrm{id}\}$. This implies that the order of every element in the algebraic subgroup $\gmmath \cap \gdrbhmath(A)\subset \gmmath$ divides 2. Because every finite algebraic subgroup of $\gmmath$ is cyclic, we have that $\gmmath \cap \gdrbhmath(A)$ either equals $\{\pm1\}\subset\mathrm{GL}(V)$, or equals to $\{1\}$.

Now suppose Conjecture \ref{onedimconj} holds. Then every one-dimensional object in $\langle\mathrm{H}^1_{\mathrm{dRB}}(A,\mathbb{Q})\rangle^{\otimes}$ is of the form $\mathbb{Q}_{\mathrm{dRB}}(k)$, for some $k\in\mathbb{Z}$. Note that $\mathbb{Q}_{\mathrm{dRB}}(k)\cong \mathbb{Q}_{\mathrm{dRB}}(1)^{\otimes k}$. Then by Tannakian duality, every character of $\gdrbmath(A)$, when restricted to $\gmmath\xhookrightarrow{}\gdrbmath(A)$ via $\tilde{\Phi}$ is of the form $t\mapsto t^{2k}$ where $k\in\mathbb{Z}$. Suppose that $\gmmath \cap \gdrbhmath(A)$ is equal to $\{1\}$. Then there exists a character $$\chi_{\frac{1}{2}}:\gdrbmath(A)\rightarrow\gmmath$$ such that its restriction to $\gmmath\xhookrightarrow{}\gdrbmath(A)$ is equal to $t\mapsto t^{-1}$ and $\chi_{\frac{1}{2}}(\gdrbhmath(A))=\{\mathrm{id}\}$. This gives a contradiction. This then implies that $\gmmath \cap \gdrbhmath(A)$ is equal to $\{\pm 1\}\subset \mathrm{GL}(V)$.

On the other hand, suppose $\gmmath \cap \gdrbhmath(A)$ is equal to $\{\pm1\}\subset\mathrm{GL}(V)$. Then given any one-dimensional dRB structure $L_{\alpha} \in\mathrm{Ob}\langle \mathrm{H}^1_{\mathrm{dRB}}(A,\mathbb{Q})\rangle^{\otimes}$, the induced character $\chi_{\alpha}:\gdrbmath(A)\rightarrow\gmmath$ satisfies $\chi_{\alpha}(\gdrbhmath(A))=\{\mathrm{id}\}$ by Lemma \ref{chartrivial} and restricts to $\gmmath \xhookrightarrow{} \gdrbmath(A)$ via $\tilde{\Phi}$ as $t\mapsto t^{m}$. By the assumption that $\gmmath \cap \gdrbhmath(A)$ is equal to $\{\pm1\}\subset\mathrm{GL}(V)$, we deduce that $m$ is an even number. Therefore by the Tannakian duality, we have that $L_{\alpha}$ is isomorphic to $\mathbb{Q}_{\mathrm{dRB}}(\frac{m}{2})$ as dRB structures.
\end{proof}
\begin{remark}
The proof of the above corollary demonstrates that $\sqrt{2\pi\sqrt{-1}}$ is $\textit{not}$ a period for an abelian variety $A$ defined over $\qbar$ if and only if $\gmmath\cap\gdrbhmath(A)=\{\pm 1\}$.
\end{remark}

\printbibliography[
heading=bibintoc,
title={References}
]



\end{document}